\newtheorem{thm}{Theorem}[section]
\newtheorem{theorem}[thm]{Theorem}
\newtheorem {definition}[thm]{Definition}
\newtheorem{corollary}[thm]{Corollary}
\newtheorem{lemma}[thm]{Lemma}
\newtheorem{proposition}[thm]{Proposition}
\newtheorem{example}[thm]{Example}
\newtheorem{remark}[thm]{Remark}
\newtheorem {conjecture}[thm]{Conjecture}
\numberwithin{equation}{section}
\begin{document}
\newpage
\thispagestyle{empty}

\baselineskip=16pt
\title{Right simple singularities in positive characteristic}      
\author{Gert-Martin Greuel and Nguyen Hong Duc}
\address{Gert-Martin Greuel\newline \indent Universit\"{a}t Kaiserslautern, Fachbereich Mathematik, Erwin-Schr\"{o}dinger-Strasse, 
\newline \indent 67663 Kaiserslautern}
\email{greuel@mathematik.uni-kl.de}
\address{Nguyen Hong Duc
\newline\indent Institute of Mathematics, 18 Hoang Quoc Viet Road, Cau Giay District \newline \indent  10307, Hanoi.} 
\email{nhduc@math.ac.vn}

\address{Universit\"{a}t Kaiserslautern, Fachbereich Mathematik, Erwin-Schr\"{o}dinger-Strasse,  
\newline \indent 67663 Kaiserslautern}
\email{dnguyen@mathematik.uni-kl.de}

\date{\today}                  
\maketitle
\begin{abstract}
We classify isolated singularities $f\in K[[x_1,\ldots, x_n]]$, which are simple, i.e. have no moduli, w.r.t. right equivalence, where $K$ is an algebraically closed field of characteristic $p>0$. For $K=\mathbb R$ or $\mathbb C$ this classification was initiated by Arnol'd, resulting in the famous ADE-series. The classification w.r.t. contact equivalence for $p>0$ was done by Greuel and Kr\" oning with a result similar to Arnol'd's. It is surprising that w.r.t. right equivalence and any given $p>0$ we have only finitely many simple singularities, i.e. there are only finitely many $k$ such that $A_k$ and $D_k$ are right simple, all the others have moduli. We conjecture a similar finiteness result for singularities with an arbitrary number of moduli. A major point of this paper is the generalization of the notion of modality to the algebraic setting, its behaviour under morphisms, and its relations to formal deformation theory. As an application we show that the modality is semicontinuous in any characteristic.
\end{abstract}

\section{Introduction}
We classify isolated singularities $f\in K[[x_1,\ldots, x_n]]$, $K$ an algebraically closed field of characteristic $p>0$, which have no moduli (modality 0) w.r.t. right equivalence, meaning that there are only finitely many right equivalence classes (see Definition \ref{def1.4.4.1}), where $f$ and $g$ are right equivalent, if they differ by a change of coordinates, see Section \ref{sec.a2}. These singularities are called right simple, following Arnol'd, who classified right simple singularities for $K=\mathbb R$ and $\mathbb C$ (cf. \cite{Arn72}). He showed that the simple singularities are exactly the ADE-singularities, i.e. the two infinite series $A_k, k\geq 1$, $D_k, k\geq 4$, and the three exceptional singularities $E_6, E_7, E_8$. It turned out later that the ADE-singularities of Arnol'd are also exactly those of modality 0 for contact equivalence. In the late eighties, Greuel and Kr\" oning showed in \cite{GK90} that the contact simple singularities over a field of positive characteristic are again exactly the ADE-singularities but with a few more normal forms in small characteristic.

A classification w.r.t. right equivalence in positive characteristic however, was never considered so far. A surprising fact of our classification is that for any fixed $p>0$ there exist only finitely many right simple singularities. We conjecture that this is a general fact for right equivalence in positive characteristic (cf. Conjecture \ref{conj}). For example, if $p=2$ and $n$ is even, there is just one right simple hypersurface,
$$x_1x_2+x_3x_4+\ldots+x_{n-1}x_n,$$
while for $n$ odd no right simple singularity exist. A table with normal forms for any $n\geq 1$ and any $p>0$ is given in section 3 (Theorems \ref{thm1.5.0} - \ref{thm1.5.2}). The problem is even interesting for univariate power series ($n=1$) (see Section \ref{sec3.1}). 
  
In section 2 we give a precise definition of the number of moduli (modality) for families of power series parametrized by an algebraic variety. In fact, we give two definitions of $G$-modality, both related to the action of an algebraic group $G$ on a variety $X$ and show that they coincide (Propositions \ref{pro1.4.0} and \ref{pro1.4.2}), a result which is valid in any characteristic. This unifies the arguments used in the classification, avoiding a lot of similar calculations for different cases. 

Moreover, we prove that the $G$-modality is upper semicontinuous for $G$ the right resp. the contact group (Proposition \ref{pro1.4.5.3}).

We introduce the notion of $G$-completeness (Definition \ref{def1.4.4.3}) which suffices to determine the modality, and we generalize the Kas-Schlessinger theorem \cite{KaS72} to deformations (unfoldings) of formal power series over algebraic varieties. The semiuniversal deformation with section of an isolated hypersurface singularity is $G$-complete for $G$ the right resp. the contact group (see Proposition \ref{pro1.4.4.1}). However, in contrast to the complex analytic case, the usual semiuniversal deformation is not sufficient to determine the modality and hence is not $G$-complete; we have to consider versal deformations with section (cf. Example \ref{rm1.4.1}).
\subsection*{Acknowledgement} 
We would like to thank the referees for their careful reading of the manuscript an helpful comments which improved the presentation of this paper. The second author was partly supported by DAAD (Germany), NAFOSTED (Vietnam), and the OWLF programme of the Mathematisches Forschungsinstitut Oberwolfach (Germany).
\section{Modality}\label{sec2}
In the sixties V. I. Arnol'd introduced the notion of modality into singularity theory for real and complex hypersurfaces (cf. \cite[Part II]{AGV85}), related to Riemann's idea of moduli for Riemann surfaces. The purpose of this section is to make the notion of modality precise in the case of hypersurface singularities over an algebraically closed field $K$ of arbitrary characteristic and relate it to deformation theory. We investigate right (resp. contact) unfoldings of a formal power series over algebraic varieties and define the modality w.r.t. unfoldings. We introduce the notion of right (resp. contact) complete unfoldings, which can be used to give an alternative definition of right (resp. contact) modality. We use \'etale neighbourhoods in order to show that an algebraic representation of the semiuniversal deformation is complete, see Proposition \ref{pro1.4.4.1}. The results of this section are used for the classification in Section 3.
\subsection{$G$-modality}
We use a Rosenlicht stratification of a variety\footnote{By an algebraic variety we mean a separated scheme of finite type over an algebraically closed field $K$, see \cite{Har77}, which is fixed through this paper. By a point we mean a closed point.} $X$ under the action of an algebraic group $G$ to define modality. By Rosenlicht \cite[Thm.2]{Ros56} (see also \cite{Ros63}) there exists an open dense subset $X_1\subset X$, which is invariant under $G$ s.t. $X_1/G$ is a geometric quotient (cf. \cite[\S 1]{MFK82}), in particular, the orbit space $X_1/G$ is an algebraic variety and the projection $p_1\colon X_1\to X_1/G,\ x\mapsto [x]$, is a surjective morphism. 

Since $X\setminus X_1$ is a variety of lower dimension, which is invariant under $G$, we can apply the theorem of Rosenlicht to $X\setminus X_1$ and get an invariant open dense subset $X_2\subset X\setminus X_1$ s.t. $X_2/G$ is a geometric quotient. Continuing in this way with $X_3\subset (X\setminus X_1)\setminus X_2$, we can finally write $X$ as finite disjoint union of $G$-invariant locally closed algebraic subvarieties $X_{i}, i=1,\ldots,s$, such that $X_{i}/G$ is a geometric quotient with quotient morphism $p_{i}\colon X_{i}\to X_{i}/G$. We call $\{X_{i}, i=1,\ldots,s\}$ a {\em Rosenlicht stratification of $X$ under G}. Note that a Rosenlicht stratification is by no means unique and that the proof of Rosenlicht, which works for arbitrary $G$, is not constructive.

\begin{definition}\label{def1.4.1}{\rm 
Let $\{X_{i}, i=1,\ldots,s\}$ be a Rosenlicht stratification of the algebraic variety $X$ under the action of an algebraic group $G$ with quotient morphisms $p_i: X_i\to X_i/G$, and let $U$ be an open neighbourhood of $x\in X$. We define
$$G\text{-}\mathrm{mod}(U):=\max_{1\leq i\leq s}\{\dim \big (p_i(U\cap X_i)\big)\},$$
and call 
$$G\text{-}\mathrm{mod}(x):=\min \{G\text{-}\mathrm{mod}(U)\ |\ U \text{ a neighbourhood of x}\}$$
the {\em $G$-modality} of $x$ (in $X$).
}\end{definition}

Note that here and later, wherever we write $\dim S$, the set $S$ is {\em constructible}, i.e. it is a finite union of locally closed subsets of a variety, so that $\dim S$ is defined. By Corollary \ref{cor1.4.1} $G\text{-}\mathrm{mod}(U)$ and $G\text{-}\mathrm{mod}(x)$ are independent of the Rosenlicht stratification.

\begin{remark}{\rm
Let $\{X_i\}$ be a Rosenlicht stratification of $X$ under $G$. In \cite{Wal83} Wall introduced the {\em $r$-modal set} $M_r(X)$ to be the closure of the union $\cup\{X_i| i\in I_r\}$ with $I_r:=\{i| \dim X_i/G\geq r\}$, which satisfies
$$X=M_0\supset M_1\supset \cdots\supset M_{\dim X}.$$
Using our definition of $G\text{-}\mathrm{mod}(x)$ one can show that
$$M_r(X)=\{x\in X| G\text{-}\mathrm{mod}(x)\geq r\}.$$
Moreover, we have $G\text{-}\mathrm{mod}(x)=r\Leftrightarrow x\in M_r(X)\setminus M_{r+1}(X)$ for $x\in X$, and for any open subset $U\subset X$, $G\text{-}\mathrm{mod}(U)=\max\{G\text{-}\mathrm{mod}(x)| x\in U\}$.
}\end{remark}

Now we will give the definition of modality for isolated singularities. We recall two important invariants of singularities. For $f\in K[[{\bf x}]]$, let $\mu(f):=\dim K[[{\bf x}]]/j(f)$ be the {\em Milnor number} and $\tau(f):=\dim K[[{\bf x}]]/(\langle f\rangle+j(f))$ the {\em Tjurina number} of $f$, where $j(f)$ is the jacobian ideal, i.e. the ideal in $K[[{\bf x}]]$ generated by all partials of $f$. Note that $f$ (resp. $K[[{\bf x}]]/\langle f\rangle$) has an {\em isolated} singularity if $\mu(f)<\infty$ (resp. $\tau(f)<\infty$).
$$$$

Let $f\in\mathfrak{m}\subset K[[{\bf x}]]$, with $\mathfrak{m}=\langle {\bf x}\rangle$ the maximal ideal, be such that $\mu(f)<\infty$ (resp. $\tau(f)<\infty$) and let $G=\mathcal{R}$ be right group (resp. $G=\mathcal{K}$ be the contact group). In order to define the modality of a power series $f$ by using algebraic groups, we have to consider its $k$-jet 
$$j^kf:=j^k(f):=\text{ image of }f \text{ in }J_k:=K[[{\bf x}]]/\mathfrak{m}^{k+1}$$
for sufficiently large $k$. Let $G_k=\mathcal{R}_k$ resp. $G_k=\mathcal{K}_k$ the $k$-jet of $G$, see Section \ref{sec.a2}.

\begin{definition}\label{def1.4.4.1}{\rm
We define the {\em $G$-modality} of $f$, $G\text{-}\mathrm{mod}(f)$, to be the $G_k$-modality of $j^k(f)$ in $J_k$ (in the sense of Definition \ref{def1.4.1}) for sufficiently large integer $k$. We call $f$ {\em right} (resp. {\em contact}) {\em simple, unimodal, bimodal} and {\em $r$-modal} if $\mathcal{R}$-$\mathrm{mod}(f)$ (resp. $\mathcal{K}$-$\mathrm{mod}(f)$) equals to 0, 1, 2 and $r$ respectively.
}\end{definition}

Here, an integer $k$ is {\em sufficiently large} for $f$ w.r.t. $G$ if there exists a neighbourhood $U$ of $j^k(f)$ in $J_k$ s.t. every $g\in \mathfrak{m}$ with $j^kg\in U$ is {\em $k$-determined w.r.t. $G$}. This means that each $h\in \mathfrak{m}$ s.t. $j^k(h)=j^k(g)$, is $G$-equivalent to $g$. Combining Propositions \ref{pro1.4.5.0} and \ref{pro1.4.5} below we obtain that $G$-$\mathrm{mod}(f)$ is independent of the sufficiently large $k$. The existence of a sufficiently large integer $k$ for $f$ w.r.t $G$ will be shown by the following proposition.

\begin{proposition}\label{pro1.4.3}
Let $f\in \mathfrak{m}^2$ be such that $\mu(f)<\infty$ (resp. $\tau(f)<\infty$) and let $G=\mathcal{R}$ (resp. $G=\mathcal{K}$). Then every $k\geq 2\cdot\mu(f)$ (resp. $k\geq 2\cdot\tau(f)$) is sufficiently large for $f$ w.r.t. $\mathcal{R}$ (resp. w.r.t. $\mathcal{K}$). For $f\in\mathfrak{m}\setminus \mathfrak{m}^2$, $k=1$ is sufficiently large for $f$ w.r.t. $G$.
\end{proposition}

\begin{proof}
By the upper semi-continuity of $\mu, \tau$ (Lemma \ref{pro1.1.1}), the subsets 
$$U_{\mu}:=\{g\in K[[{\bf x}]]\ |\ \mu(g)\leq \mu(f)\} \text{ and } U_{\tau}:=\{g\in K[[{\bf x}]]\ |\ \tau(g)\leq \tau(f)\}$$
are open\footnote{$V\subset K[[{\bf x}]]$ is open iff $j^l(V)$ is open in $J_l$ for all $l$.} in $K[[{\bf x}]]$. It follows from \cite[Cor. 1]{BGM12} that $g$ is $k$-determined w.r.t. $G$ for all $g\in U_\mu$ (resp. $U_\tau$) and all $k\geq 2\cdot\mu(f)$ (resp. $k\geq 2\cdot\tau(f)$). This means that $k$ is sufficiently large for $f$ w.r.t. $G$. For $f\in\mathfrak{m}\setminus \mathfrak{m}^2$, $f$ is non-singular and the result follows from the implicit function theorem.
\end{proof}

\subsection{Complete unfoldings}\label{sec2.1}
Let $T$ be an {\em (affine) variety over $K$} with structure sheaf $\mathcal{O}_T$ and its algebra of global section $\mathcal{O}(T)=\mathcal{O}_T(T)$. If $F=\sum a_{\alpha}{\bf x}^{\alpha}\in \mathcal{O}(T)[[{\bf x}]], a_{\alpha}\in \mathcal{O}(T)$, then for each $t\in T$, $a_{\alpha}(t)\in K$ denotes the image of $a_{\alpha}$ in $\mathcal{O}_{T,t}/\mathfrak{m}_t=K$, with $\mathfrak{m}_t$ the maximal ideal of the stalk $\mathcal{O}_{T,t}$. Therefore 
$$f_t({\bf x}):=F({\bf x},t)=\sum a_{\alpha}(t){\bf x}^{\alpha}\in K[[{\bf x}]], t\in T,$$
defines a family of power series $f_t$ parametrized by $t\in T$. In the following we often write $f_t({\bf x})$ or $F({\bf x},t)$ instead of $f_t$ or $F$, just to show the variables ${\bf x}$ and the parameter $t\in T$. Moreover if $T=\mathbb A^l=\mathrm{Spec}(K[t_1,\ldots,t_l])$ then $F\in K[{\bf t}][[{\bf x}]]\subset K[[{\bf x,t}]]$ with ${\bf t}:=(t_1,\ldots,t_l)$. Let $f\in \mathfrak{m}$ and let $t_0\in T$. An element $F({\bf x},t)\in \mathcal{O}(T)[[{\bf x}]]$ is called an {\em unfolding} or {\em deformation with trivial section} of $f$ at $t_0\in T$ over $T$ if $f_{t_0}=f$ and $f_t\in \mathfrak{m}$ for all $t\in T$.
 
\begin{definition}\label{def1.4.4.2}{\rm
Let $f\in \mathfrak{m}$ be such that $\mu(f)<\infty$ (resp. $\tau(f)<\infty$), and let $f_t({\bf x})$ be an unfolding of $f$ at $t_0$ over an affine variety $T$. Let $G=\mathcal{R}$ (resp. $G=\mathcal{K}$), let $k$ be sufficiently large for $f$ w.r.t. $G$ and let $\Phi_k$ be the morphism of algebraic varieties from $T$ to the $k$-jet space $J_k$ defined by
$$\Phi_k: T \to J_k,\ t \mapsto  j^k f_t({\bf x}).$$
We define $G\text{-}\mathrm{mod}_F(f):=G_k\text{-}\mathrm{mod}_{\Phi_k}(t_0)$ and call it the {\em $G$-modality} of $f$ w.r.t. the unfolding $F$. Note that $G_k$ acts on on $J_k$ and that $G_k\text{-}\mathrm{mod}_{\Phi_k}(t_0)$ is understood in the sense of Definition \ref{def1.4.2.2}.
}\end{definition}

\begin{proposition}\label{pro1.4.5.0}
For any unfolding $F$ of $f$ at $t_0$, the number $G\text{-}\mathrm{mod}_F(f)$ is independent of the sufficient large integer $k$ for $f$ w.r.t. $G$.
\end{proposition}

\begin{proof}
Let $U_G$ denote the open neighbourhood $U_\mu$ resp. $U_\tau$ of $f$ in $K[[{\bf x}]]$, defined as in the proof of Proposition \ref{pro1.4.3}. It is easy to see that the map
$$\Phi: T\longrightarrow K[[{\bf x}]],\ t \mapsto  f_t({\bf x})$$
is continuous. Then the pre-image $V_G=\Phi^{-1}(U_G)$ is an open neighbourhood of $t_0$. For each $k$ sufficient large for $f$ w.r.t. $G$ we consider the map
$$\varphi_k: V_G \overset{i}{\hookrightarrow } T \overset{\Phi}{\longrightarrow} K[[{\bf x}]] \overset{j^k}{\longrightarrow} J_k.$$
By Corollary \ref{coro1.4.2}, 
$$G\text{-}\mathrm{mod}_{\Phi_k}(t_0)=G\text{-}\mathrm{mod}_{\varphi_k}(t_0)$$
since $\Phi_k=j^k\circ \Phi$. If $k_1, k_2$ are both sufficient large for $f$ w.r.t. $G$, then we can easily check that
$$\varphi_{k_1}^{-1}\big(G_{k_1}\cdot \varphi_{k_1}(t)\big)=\varphi_{k_2}^{-1}\big(G_{k_2}\cdot \varphi_{k_2}(t)\big),\ \forall t\in V_G.$$
Corollary \ref{coro1.4.3} yields that
$$G_{k_1}\text{-}\mathrm{mod}_{\varphi_{k_1}}(t_0)=G_{k_2}\text{-}\mathrm{mod}_{\varphi_{k_2}}(t_0)$$
which proves the proposition.
\end{proof}

\begin{proposition}[Semicontinuity of modality]\label{pro1.4.5.3}
Let $G=\mathcal{R}$ (resp. $G=\mathcal{K}$). Then the $G$-modality is upper semicontinunous, i.e. for all $i\in \mathbb N$, the sets 
$$U_i:=\{f\in \mathfrak{m}\subset K[[{\bf x}]]| G\text{-}\mathrm{mod}(f)\leq i\}$$
are open in $K[[{\bf x}]]$. Consequently, the $G$-modality is upper semicontinunous for unfoldings, i.e. for any unfolding $f_t({\bf x})$ at $t_0$ over $T$ of $f$ with $\mu(f)<\infty$ (resp. $\tau(f)<\infty$) the set
$$\{t\in T| G\text{-}\mathrm{mod}(f_t)\leq G\text{-}\mathrm{mod}(f)\}$$
is open in $T$. 
\end{proposition}

\begin{proof}
Let $f\in U_i$, and let $k\geq 2\mu(f)$ (resp. $k\geq 2\tau(f)$). By Proposition \ref{pro1.4.3} $k$ is sufficiently large for $f$ w.r.t. $G$ and hence $G\text{-}\mathrm{mod}(f)=G_k\text{-}\mathrm{mod}(j^kf)$. Take an open neighbourhood $V$ of $j^kf$ in $J_k$ such that $G_k\text{-}\mathrm{mod}(V)=G_k\text{-}\mathrm{mod}(j^kf)$. Set $U:=(j^k)^{-1}(V)\cap \tilde U$, where
$$\tilde U:=\{g\in K[[{\bf x}]]| \mu(g)\leq \mu(f)\}.$$
By Lemma \ref{pro1.1.1} $\tilde U$ is open and hence $U$ is an open neighbourhood of $f$ in $K[[{\bf x}]]$. We now show that $U\subset U_i$. In fact, for any $g\in U$ one has $k\geq 2\mu(f)\geq 2\mu(g)$ and hence $k$ is sufficiently large for $g$ due to Proposition \ref{pro1.4.3}. Then
$$G\text{-}\mathrm{mod}(g)=G_k\text{-}\mathrm{mod}(j^kg)\leq G_k\text{-}\mathrm{mod}(V)=G_k\text{-}\mathrm{mod}(j^kf)=G\text{-}\mathrm{mod}(f)\leq i.$$
This implies that $U\subset U_i$ and hence $U_i$ is open in $K[[{\bf x}]]$.
\end{proof}

So far we considered families of singularities parametrized by (affine) varieties, in particular by sufficiently high jet spaces. Now we want to use the semiuniversal deformation (with trivial section) of a singularity since its base space has much smaller dimension. However for moduli problems, the formal deformation theory is not sufficient. We have to pass to the \'etale topology and apply Artin's resp. Elkik's algebraization theorems.

Recall that an \'etale neighbourhood of a point $s$ in a variety $S$ consists of a variety $U$ with a point $u\in U$ and an \'etale morphism $\varphi: U\to S$ with $\varphi(u)=s$ (see, \cite[Definition III.5.1]{Mum88}). $\varphi$ is a morphism of pointed varieties, usually denoted by $\varphi: U,u\to S,s$. The connected \'etale  neighbourhoods of $s$ form a filtered system and the direct limit
$$\mathcal{\tilde{O}}_{S,s}:=\lim_{\longrightarrow }\mathcal{O}_{U,u}=\lim_{\longrightarrow}\mathcal{O}(U)$$ 
is called the {\em Henselization} (see \cite{Na53}, \cite{Ra70}, \cite{KPPRM78}) of $\mathcal{O}_{S,s}$. We have $\mathcal{\hat{O}}_{S,s}=\mathcal{\hat{\tilde{O}}}_{S,s}=\mathcal{\hat{O}}_{U,u}$ where $^{\land}$ denotes the completion w.r.t. the maximal ideal. The Henselization of $K[{\bf x}]_{\langle {\bf x}\rangle}$ is the ring $K\langle {\bf x}\rangle$ of algebraic power series in ${\bf x}=(x_1,\ldots,x_n)$. $K[{\bf x}]\subset K\langle {\bf x}\rangle\subset K[[ {\bf x}]]$ and $K\langle {\bf x}\rangle$ may be considered as the union of all $\mathcal{O}(U)\subset K[[ {\bf x}]]$ or $\mathcal{O}_{U,u}\subset K[[ {\bf x}]]$ for $U,u$ an \'etale neighbourhood of $0$ in $\mathbb A^n$. This implies

\begin{remark}\label{rm1.4.0}{\rm 
For each finitely generated subalgebra $A\subset K\langle {\bf x}\rangle$ containing $K[{\bf x}]$ there exist an \'etale neighbourhood $\varphi:V,v_0\to \mathbb A^n,0$ of $0\in \mathbb A^n$ and an inclusion $A\hookrightarrow  \mathcal{O}(V)$, $a({\bf x})\mapsto a_{\varphi}\in \mathcal{O}(V)$. If $a\in K[{\bf x}]$ then $a_{\varphi}(v)=a({\varphi}(v))$ for all $v\in V$, where $a_{\varphi}(v)\in K$ is the image of $a_{\varphi}$ in $\mathcal{O}_{V,v}/\mathfrak{m}_v$.
}\end{remark}

\begin{definition}\label{def1.4.4.3}{\rm
Let $F({\bf x},t)$ be an unfolding of $f$ at $t_0$ over an affine variety $T$. Let $G=\mathcal{R}$ or $G=\mathcal{K}$.
\begin{itemize}
\item[(a)] An unfolding $H({\bf x},s)$ of $f$ at $s_0$ over an affine variety $S$ is called a {\em pullback} or an {\em induced unfolding} of $F$ if there exists a morphism $\psi:S,s_0\to T,t_0$ s.t. $H({\bf x},s)=F({\bf x},\psi(s))$.
\item[(b)] An unfolding $H({\bf x},s)$ of $f$ at $s_0$ over an affine variety $S$ is called an {\em \'etale $G$-pullback} of $F$ if there exist an \'etale neighbourhood $\varphi:W,w_0\to S,s_0$ and a morphism $\psi:W,w_0\to T,t_0$ such that $H({\bf x},\varphi(w))$ is $G$-equivalent to $F({\bf x},\psi(w))$ for all $w\in W$.
\item[(c)] The unfolding $F({\bf x},t)$ is called {\em right } (resp. {\em contact}) {\em complete} (or {\em $G$-complete}) if any unfolding of $f$ is an \'etale $G$-pullback of $F$.
\end{itemize}
}\end{definition}

The following lemma is an immediate consequence of the definition. 
\begin{lemma}\label{coro1.4.5}
Let $H$ be an \'etale $G$-pullback of the unfolding $F$. If $H$ is $G$-complete, then so is $F$. 
\end{lemma}

\begin{proposition}\label{pro1.4.5.2}
Any singularity $f$ with $\mu(f)<\infty$ (resp. $\tau(f)<\infty$) has a right (resp. contact) complete unfolding given by a sufficiently large jet space. More precisely, if $k$ is sufficiently large for $f$ w.r.t. $\mathcal{R}$ (resp. w.r.t. $\mathcal{K}$), then the unfolding of $f$ at $0$ over $J_k=\mathbb A^N$ (with the identification: $\sum_{|\alpha|\leq k} c_{\alpha}{\bf x}^{\alpha}\mapsto c=(c_{\alpha})_{|\alpha|\leq k}, \mathcal{O}(J_k)=K[(c_\alpha)_{|\alpha|\leq k}]$)
$$f_{c}({\bf x})=F({\bf x},c)=f({\bf x})+\sum_{|\alpha|\leq k} c_{\alpha} {\bf x}^{\alpha}\in K[(c_\alpha)_{|\alpha|\leq k}][[{\bf x}]]$$
is right (resp. contact) complete.
\end{proposition}

\begin{proof}
Since $k$ is sufficiently large for $f$ w.r.t. right (resp. contact) equivalence, there exists a neighbourhood $U\subset J_k$ of $j^kf$ such that each $g\in U$ is right (resp. contact) $k$-determined. Let $h_{s}({\bf x}):=H({\bf x},s)$ be an arbitrary unfolding of $f$ at $s_0$ over $S$ and let $W:=\psi^{-1}(U)$ be the pre-image of $U$ by the morphism 
$$\psi:S\longrightarrow \mathbb A^N, s\mapsto j^kh_{s}({\bf x})-j^kf({\bf x}).$$
Then $H({\bf x},s)$ is right (resp. contact) equivalent to $F({\bf x},\psi(s))$ for all $s\in W$ since 
$$j^k H({\bf x},s)=j^k F({\bf x},\psi(s))\in U$$
and hence $H$ is a pullback of $F$. Since every pullback is an \'etale $G$-pullback, this proves the proposition.
\end{proof}

\begin{proposition}\label{pro1.4.5}
Let $f\in K[[{\bf x}]]$ be such that $\mu(f)<\infty$ (resp. $\tau(f)<\infty$) and let $G=\mathcal{R}$ (resp. $G=\mathcal{K}$). Let $F({\bf x},t)$ be an unfolding of $f$ at $t_0$ over $T$.  
\begin{itemize}
\item[(i)] If the unfolding $H({\bf x},s)$ of $f$ at $s_0$ over $S$ is an \'etale $G$-pullback of $F$, then $G\text{-}\mathrm{mod}_{H}(f)\leq G\text{-}\mathrm{mod}_{F}(f).$
\item[(ii)] We always have $G\text{-}\mathrm{mod}(f)\geq G\text{-}\mathrm{mod}_{F}(f)$. Equality holds if $F({\bf x},t)$ is $G$-complete. 
\end{itemize}
\end{proposition}

\begin{proof}
(i) Since $H$ is an \'etale $G$-pullback of $F$, there exist an \'etale neighbourhood $\varphi:W,w_0\to S,s_0$ and a morphism $\psi:W,w_0\to T,t_0$ such that $G\cdot H({\bf x},\varphi(w))=G\cdot F({\bf x},\psi(w))$ for all $w\in W$.

Let $k$ be sufficiently large for $f$ and let $\Phi_k$ and $\Psi_k$ be the morphisms defined by
$$\Phi_k: T\to J_k, t\mapsto j^kf_{t}({\bf x})\text{ and }\Psi_k: S\to J_k, s\mapsto j^kh_{s}({\bf x}).$$
Then $\Phi^{-1}(G\cdot \Phi({w}))=\Psi^{-1}(G\cdot \Psi({w}))$ for all $w\in W$ with $\Phi:=\Phi_k\circ \psi$ and $\Psi:=\Psi_k\circ \varphi$. Combining Corollary \ref{coro1.4.2} and \ref{coro1.4.3} we obtain
$$G\text{-}\mathrm{mod}_{F}(f)=G_k\text{-}\mathrm{mod}_{\Phi_k}(t_0)\geq G_k\text{-}\mathrm{mod}_{\Phi}(w_0)=G_k\text{-}\mathrm{mod}_{\Psi}(w_0)=G_k\text{-}\mathrm{mod}_{\Psi_k}(s_0)=G\text{-}\mathrm{mod}_{H}(f).$$

(ii) Since $k$ is sufficiently large for $f$ w.r.t. $G$,
$$G\text{-}\mathrm{mod}(f)=G_k\text{-}\mathrm{mod}(j^k f)\text{ and } G\text{-}\mathrm{mod}_{F}(f)=G_k\text{-}\mathrm{mod}_{\Phi_k}(t_0).$$
It follows from Corollary \ref{coro1.4.1} that $G_k\text{-}\mathrm{mod}(j^k f)\geq G_k\text{-}\mathrm{mod}_{\Phi_k}(t_0).$ Hence
$$G\text{-}\mathrm{mod}(f)\geq G\text{-}\mathrm{mod}_{F}(f).$$
We identify $J_k$ with $\mathbb A^N$ via the map $\sum_{|\alpha|\leq k} c_{\alpha}{\bf x}^{\alpha}\mapsto c:=(c_{\alpha})_{|\alpha|\leq k}$ and consider the unfolding 
$$h_c({\bf x}):=H({\bf x},c):=f({\bf x})+\sum_{|\alpha|\leq k} c_{\alpha} {\bf x}^{\alpha}$$ 
of $f$ at $0$ over $\mathbb A^N$. Since the translation
\begin{displaymath}
\begin{array}{rccccc}
\psi: \mathbb A^N&\longrightarrow & J_k&=&\mathbb A^N\\
c &\mapsto & j^k h_{c}({\bf x})&=& j^kf+c
\end{array}
\end{displaymath}
is an isomorphism, it follows from Corollary \ref{coro1.4.1} that
$$G_k\text{-}\mathrm{mod}(j^k f) =G_k\text{-}\mathrm{mod}_{\psi}(0)$$
and hence $G\text{-}\mathrm{mod}(f)=G\text{-}\mathrm{mod}_{H}(f)$.

Now, if the unfolding $F$ is $G$-complete, then $H$ is an \'etale $G$-pullback of $F$. By (i), $G\text{-}\mathrm{mod}_H(f) \leq G\text{-}\mathrm{mod}_{F}(f)$ and hence $G\text{-}\mathrm{mod}(f)=G\text{-}\mathrm{mod}_F(f)$.
\end{proof}

\begin{example}\label{rm1.4.1}{\rm
(a) The unfolding $F(x,t)= x^{p+1}+t_1 x+\ldots+t_{p} x^{p}$ of the univariate polynomial $f=x^{p+1}$ over $T=\mathbb A^{p}$ is right complete. Here $(t_1,\ldots,t_p)$ are the coordinates of $t\in T$.

Indeed, for any unfolding $H(x,s)$ of $f$ at $s_0$ over some variety $S$ we write
$$H(x,s)=a_1(s)x+\ldots+a_p(s)x^{p}+a_{p+1}(s)x^{p+1}+\ldots$$
with $a_i(s)\in \mathcal{O}(S)$. Then $a_i(s_0)=0\ \forall i\leq p$ and $a_{p+1}(s_0)\neq 0$. Consider the morphism $\varphi:S\to T,\ s\mapsto \big(a_{1}(s),\ldots, a_{p}(s)\big)$ and the open neighbourhood $W:=\{s \in S\ |\ a_{p+1}(s)\neq 0\}$ of $s_0$ in $S$. Then it follows from \cite[Thm. 2.1]{BGM12} that 
$$F(x,\varphi(s))\sim_r H(x,s), \text{ for each } s\in W.$$ 
Note that $\{x,\ldots,x^p\}$ is a basis of $\mathfrak{m}/\mathfrak{m}j(f)$ and that $F$ is a semiuniversal deformation of $f$ with trivial section by Proposition \ref{pro1.4.4.1}. 

(b) The right semi-universal deformation (without section) of $f=x^{p+1}\in K[[x]]$ with $\mathrm{char}(K)=p>2$ is given by $H(x,t)=x^{p+1}+t_1x+\ldots+t_{p-1} x^{p-1}$. This unfolding of $f$ over $\mathbb A^{p-1}$ is not right complete. In fact, it is not difficult to see that $H(x,t)$ is equivalent to one of $\{x,\ldots, x^{p-1}, x^{p+1}\}$ for $t\in \mathbb A^{p}$. Corollary \ref{coro1.4.6} yields that $\mathcal{R}\text{-}\mathrm{mod}_H(f)=0$, while $\mathcal{R}\text{-}\mathrm{mod}(f)>0$ by Theorem \ref{thm1.5.0} and hence $H$ is not right complete due to Proposition \ref{pro1.4.5.3}.

To see this directly, consider the family $x^{p+1}+s x^{p}$ in characteristic $p>0$ over $\mathbb A^1$ which, as an unfolding with trivial section, cannot be induced by a morphism $\varphi:\mathbb A^1\to \mathbb A^{p-1}$: Since $H(x,\varphi(s))$ has multiplicity $\neq p$ in $K[[x]]$ for all $s\neq 0$, it cannot be right equivalent to $x^{p+1}+s x^{p}$ which has multiplicity $p$ for $s\neq 0$.

This is of course not a contradiction to $F$ being versal as deformation without section, which means that the family $x^{p+1}+s x^{p}\in K[[x,s]]$ can be induced from $H$ (up to isomorphism in $K[[x,s]]$ over $K[[s]]$) by a morphism $\varphi\colon K[[t_1,\ldots,t_{p-1}]] \to K[[s]]$. In fact, define $\varphi$ by $t_1\mapsto -s^p, t_i\mapsto 0$ for $i>1$, then, if $\mathrm{char}(K)=p$,
$$H(x,\varphi(s))=-s^px+x^{p+1}=(x-s)^p x\sim_r x^{p+1}+s x^{p},$$
via the isomorphism $\Phi: K[[x,s]]\to K[[x,s]]$ over $K[[s]]$, given by $x\mapsto x-s, s\mapsto s$. However, $\Phi$ does not respect the trivial section.
 
 If $\mathrm{char}(K)\neq p$, we can use the Tschirnhaus transformation $x\mapsto x-\dfrac{s}{p}$ to eliminate $sx^p$ from $x^{p+1}+s x^{p}$ and to show that $x^{p+1}+s x^{p}$ can be induced from $H$. 
}\end{example} 

The following proposition is stronger than Proposition \ref{pro1.4.5.2}  because it reduces the number of parameters of a $G$-complete unfolding considerably. For the proof we need the nested Artin approximation theorem.

\begin{proposition}\label{pro1.4.4.1}
Let $f\in \mathfrak{m}\subset K[[{\bf x}]]$ be such that $\mu(f)<\infty$ (resp. $\tau(f)<\infty$). Let $g_1,\ldots, g_{l}\in K[{\bf x}]$ be a system of $K$-- generators of $\mathfrak{m}/\mathfrak{m}\cdot j(f)$ (resp. of $\mathfrak{m}/\langle f\rangle +\mathfrak{m}\cdot j(f)$), with $j(f)$ the jacobian ideal of $f$. Then the unfolding (with trivial section) of $f$ over $\mathbb A^l$,
$$F({\bf x},t)=f({\bf x})+\sum_{i=1}^l t_i g_i({\bf x}) \in K[{\bf t}][[{\bf x}]]$$
is (an algebraic representative of) a formally versal deformation of $f$ with trivial section with respect to right (resp. contact) equivalence, which is semi-universal if the system $\{g_i\}$ is a basis. This unfolding is right (resp. contact) complete.
\end{proposition}

\begin{proof}
We first show that if $\{g_i\}$ is a basis, then $F$ represents a formally semiuniversal deformation of $f$ with trivial section with respect to right (resp. contact) equivalence. Indeed we may consider $F$ as a deformation of $f$ over $K[[{\bf t}]]:=K[[t_1,\ldots,t_l]]$. It is shown in \cite[Prop. 2.7]{BGM12} that the tangent space to the base space of the semiuniversal deformation with trivial section is $\mathfrak{m}/\mathfrak{m}\cdot j(f)$ (resp. $\mathfrak{m}/\langle f\rangle +\mathfrak{m}\cdot j(f)$). The proof of the existence of a semiuniversal deformation in \cite{KaS72} or \cite[Thm. II.1.16]{GLS06} can be easily adapted to deformations with section, showing the versality of $F$ and hence proving the first claim. 

Let $G=\mathcal{R}$ (resp. $G=\mathcal{K}$) and let $k$ be sufficiently large for $f$ w.r.t. $G$. Let $j^k$ denote the projections $K[{\bf t}][[{\bf x}]]\to K[{\bf t}][[{\bf x}]]/\langle {\bf x}\rangle^{k+1}$ and $K[[{\bf x}]]\to K[[{\bf x}]]/\langle {\bf x}\rangle^{k+1}$. Then we may replace $F$ resp. $f$ by $j^kF$ resp. $j^kf$ and assume that $F=j^kF\in K[{\bf x,t}]$ and $f=j^kf\in K[{\bf x}]$ by the following facts:

\begin{itemize}
\item[(a)] $\{j^kg_1,\ldots, j^kg_{l}\}$ is a system of generators of $\mathfrak{m}/\mathfrak{m}\cdot j(j^kf)$ (resp. of $\mathfrak{m}/\langle f\rangle +\mathfrak{m}\cdot j(j^kf)$) with $j(j^kf)$ the jacobian ideal of $j^kf$.
\item[(b)] $F$ is $G$-complete if and only if $j^kF({\bf x},t)$ is a $G$-complete unfolding of $j^k f({\bf x})$ at $t_0$ over $T$ (see \cite[Prop. 3.4.12]{Ng13}).
\end{itemize}
Consider the complete unfolding of $f$ over $\mathbb A^N=J_k$ at $0$
$$h_c({\bf x}):=H({\bf x},c)=f({\bf x})+\sum_{|\alpha|\leq k} c_{\alpha} {\bf x}^{\alpha}$$ 
 as in Proposition \ref{pro1.4.5.2}. By Lemma \ref{coro1.4.5}, the proof is completed by showing that $H$ is an \'etale $G$-pullback of $F$. 

In fact, consider $H$ as a deformation with trivial section of $f$ over $K[[{\bf c}]]:=K[[(c_{\alpha})_{|\alpha|\leq k}]]$. Although $F$ and $H$ are polynomials, the versality of $F$ implies only that $H$ can be induced formally from $F$. This means, there exist tuples of formal power series $\hat\Phi=(\hat\Phi_1,\ldots,\hat\Phi_n)\in \big (\langle {\bf x}\rangle K[[{\bf x,c}]]\big )^n$, $\hat\varphi=(\hat\varphi_1,\ldots,\hat\varphi_l)\in \big (\langle {\bf c}\rangle K[[{\bf c}]]\big )^l$ and a unit $\hat u({\bf x,c})\in K[[{\bf x,c}]]$ (with $\hat u=1$ for $G=\mathcal{R}$) with

\begin{equation}
\hat u({0,0})\neq 0 \text{ and }\Big(\det\big( \frac{\partial \hat\Phi_i}{\partial x_j}\big)\Big)|_{(0,0)}\neq 0,
\end{equation}

such that
$$H\big(\hat\Phi({\bf x,c}\big),{\bf c})=\hat u({\bf x,c})\cdot F\big({\bf x},\hat\varphi({\bf c})\big).$$

Let ${\bf y}=(y_1,\ldots,y_{n+l+1})$ be new indeterminates (omitting $y_{n+l+1}$ if $G=\mathcal{R}$) and let
$$P({\bf x,c,y})=H(y_1,\ldots,y_n,{\bf c})-y_{n+l+1}\cdot F({\bf x},y_{n+1},\ldots,y_{n+l})\in K[{\bf x,c,y}].$$

The formal versality of $F$ implies that $P=0$ has a formal solution $\hat{\bf y}=(\hat y_1,\ldots, \hat y_{n+l+1})$ with 
\begin{eqnarray*}
\hat{y}_i&:=&\hat\Phi_i\in K[[{\bf x,c}]], 1\leq i\leq n,\\
\hat{y}_{n+j}&:=&\hat\varphi_j \in K[[{\bf c}]], 1\leq j\leq l,\\
\hat{y}_{n+l+1}&:=&\hat u\in K[[{\bf x,c}]].
\end{eqnarray*}

By the nested Artin Approximation Theorem (\cite[Theorem 1.4]{Po86}), there exists an algebraic solution $\tilde{\bf y}=(\tilde{y}_1,\ldots, \tilde{y}_{n+l+1})$ of $P=0$ such that
\begin{eqnarray*}
\tilde{y}_i&\in& K\langle{\bf x,c}\rangle, 1\leq i\leq n,\\
\tilde{y}_{n+j}&\in & K\langle{\bf c}\rangle, 1\leq j\leq l,\\
\tilde{y}_{n+l+1}&\in & K\langle{\bf x,c}\rangle
\end{eqnarray*}
and 
\begin{equation}
\tilde{\bf y}({\bf c,x})-\hat{\bf y}({\bf c,x})\in \langle{\bf c,x}\rangle^2.
\end{equation}

Passing to the $k$-jet spaces by the projection $j^k:K[[{\bf x,c}]]\to K[[{\bf x,c}]]/\langle{\bf x}\rangle^{k+1}$ we get 
$$j^k\big(P({\bf x,c},j^k(\tilde{\bf y}))\big)=j^k\big(H(j^k(\tilde{y}_1),\ldots,j^k(\tilde{y}_n),{\bf c})\big)-j^k(\tilde{y}_{n+l+1})\cdot j^k\big(F({\bf x},j^k(\tilde{y}_{n+1}),\ldots,j^k(\tilde{y}_{n+l}))\big)=0.$$
Let $A\subset K[[{\bf c}]]$ be the subalgebra generated by $c_{\alpha}, |\alpha|\leq k$, and all the coefficients of ${\bf x}^{\alpha}, |\alpha|\leq k$, which appear in all $j^k(\tilde{y}_i)({\bf x,c}), i=1,\ldots,n+l+1$. Then  $A\subset K\langle{\bf c}\rangle$ since $K\langle{\bf c,x}\rangle\subset K\langle{\bf c}\rangle[[{\bf x}]]$ (cf. \cite[Lemma 3.4.9]{Ng13}). 
It follows from Remark \ref{rm1.4.0} that there exists an \'etale neighbourhood $\varphi: V,v_0 \to \mathbb A^N,0$ and an inclusion $A\hookrightarrow \mathcal{O}(V), a\mapsto a_{\varphi}$ such that if $a\in K[{\bf c}]$ the $a_{\varphi}(v)=a(\varphi(v))$ for all $v\in V$. This implies that, denoting by $\iota:A[{\bf x}]\hookrightarrow \mathcal{O}(V)[{\bf x}]$ the induced inclusion, 
$$j^k\big(P({\bf x},\varphi(v), \Phi({\bf x},v)\big)=0\text{ in } \mathcal{O}(V)[{\bf x}],$$
where $\Phi({\bf x},v)=(\Phi_1({\bf x},v),\ldots,\Phi_n({\bf x},v))$ with $\Phi_i(({\bf x},v)))=\iota (j^k(\tilde{y}_i)({\bf x,c}))$. Combining (2.1), (2.2) we obtain that there exists an open neighbourhood $W'\subset V$ of $v_0$ such that for all $v\in W'$,
$$\Phi({\bf x},v) \in Aut(K[[{\bf x}]])\text{ and } u({\bf x},v):=\iota(j^k \tilde{y}_{n+l+1}({\bf x,c})) \in K[[{\bf x}]]^{*}.
$$
This implies $j^kH(\Phi({\bf x},v),\varphi(v))$ is $G$-equivalent to $F({\bf x},\psi(v))$, where
$$\psi:V,v_0\to \mathbb A^l,\ \psi(v)=(\psi_1(v),\ldots,\psi_l(v))$$ 
with $\psi_j(v)=\iota(j^k(\tilde{y}_{n+j})({\bf c}))$. Moreover since $k$ is sufficiently large for $f$ w.r.t. $G$, there exists an open neighbourhood $W\subset W'$ of $v_0$ such that 
$$H(\Phi({\bf x},v),\varphi(v))\sim_G j^kH(\Phi({\bf x},v),\varphi(v)),\ \forall v\in W.$$
Hence
$$ H(\Phi({\bf x},v),\varphi(v))\sim_G j^kH(\Phi({\bf x},v),\varphi(v))\sim_G F({\bf x},\psi({v})), \forall u\in W,$$
which proves the claim.
\end{proof}

The following proposition will be used to show how the modality of $f$ is related to the number of parameters in families of normal forms. To make this precise we introduce $G$-modular families. By a {\em $G$-modular family} over a variety $S$ we mean a family $h_s({\bf x})=H({\bf x},s)\in \mathcal O(S)[[{\bf x}]]$ such that for each $s\in S$ there are only finitely many $s'\in S$ such that $h_s$ is $G$-equivalent to $h_{s'}$. 

\begin{proposition}\label{pro}
Let $f\in K[[{\bf x}]]$ be such that $\mu(f)<\infty$ (resp. $\tau(f)<\infty$) and let $G=\mathcal{R}$ (resp. $G=\mathcal{K}$). Let $f_t({\bf x})=F({\bf x},t)$ be any unfolding of $f$ at $t_0$ over $T$. Assume that there exist an open neighbourhood $W\subset T$ of $t_0$ and families $h^{(i)}_{s_i}({\bf x})=H^{(i)}({\bf x},s_i)\in \mathcal O(S_i)[[{\bf x}]]$ over affine varieties $S_i, i=1,\ldots,q$, such that for all $t\in W$ there exists $s_i\in S_i$ such that $f_t\sim_G h^{(i)}_{s_i}$.
\begin{itemize}
\item[(i)] We have
$$G\text{-}\mathrm{mod}_F(f)\leq \max_{i=1,\ldots,q}\{\dim S_i\}.$$
\item[(ii)] Assume moreover that each family $h^{(i)}_{s_i}({\bf x}), i=1,\ldots,q$, is $G$-modular and that for each open neighbourhood $V$ of $t_0$ in $W$ and for all $s_i\in S_i$ there exists a $t\in V$ such that $f_t\sim_G h^{(i)}_{s_i}$. Then
$$G\text{-}\mathrm{mod}_F(f)=\max_{i=1,\ldots,q} \{\dim S_i\}.$$
\end{itemize}
\end{proposition}

\begin{proof}
(i) Let $k$ be sufficiently large for $f$ w.r.t. $G$. Considering the morphisms 
$$\Phi_k: T\to J_k,\ t \mapsto j^k f_t({\bf x})\text{ and }\Phi^{(i)}_k: S_i\to J_k,\ s\mapsto j^k h^{(i)}_s({\bf x}),  i=1,\ldots,q,$$
and applying Corollary \ref{coro1.4.4}(i) we obtain that
$$G_k\text{-}\mathrm{mod}_{\Phi_k}(W)\leq \max_{i=1,\ldots,q}\{G_k\text{-}\mathrm{mod}_{\Phi^{(i)}_k}(S_i)\}.$$
Hence
$$G\text{-}\mathrm{mod}_F(f)=G_k\text{-}\mathrm{mod}_{\Phi_k}(0)\leq G_k\text{-}\mathrm{mod}_{\Phi_k}(W)\leq \max_{i=1,\ldots,q}\{G_k\text{-}\mathrm{mod}_{\Phi^{(i)}_k}(S_i)\}\leq  \max_{i=1,\ldots,q}\{\dim S_i\}.$$

(ii) Let $V$ be an open neighbourhood of $0$ such that $G\text{-}\mathrm{mod}_F(f)=G_k\text{-}\mathrm{mod}_{\Phi_k}(0) = G_k\text{-}\mathrm{mod}_{\Phi_k}(V)$. It follows from Corollary \ref{coro1.4.4}(ii) that
$$G_k\text{-}\mathrm{mod}_{\Phi_k}(V)=\max_{i=1,\ldots,q}\{G_k\text{-}\mathrm{mod}_{\Phi^{(i)}_k}(S_i)\}.$$
Moreover since the $h^{(i)}_{s_i}$ are modular, we can see, with the notations of Definition \ref{def1.4.2.1}, that for each $s_i\in S_i$ the set $V_{G_k,\Phi^{(i)}_k}(s_i)$ is finite and hence $G_k\text{-}\mathrm{mod}_{\Phi^{(i)}_k}(S_i)=G_k\text{-}\mathrm{par}_{\Phi^{(i)}_k}(S_i)=\dim S_i$. This completes the proof.
\end{proof}

Combining Propositions \ref{pro1.4.5} and \ref{pro} we get

\begin{corollary}\label{}
Let $f\in K[[{\bf x}]]$ be such that $\mu(f)<\infty$ (resp. $\tau(f)<\infty$) and let $G=\mathcal{R}$ (resp. $G=\mathcal{K}$). Let $f_t({\bf x})=F({\bf x},t)$ be a $G$-complete unfolding of $f$ at $t_0$ over $T$ (e.g. $F$ an algebraic representative of a $G$-versal deformation with trivial section of $f$ as in Proposition \ref{pro1.4.4.1}). Assume that there are finitely many $G$-modular families $h^{(i)}_{s_i}({\bf x})=H^{(i)}({\bf x},s_i)\in \mathcal O(S_i)[[{\bf x}]]$ and a neighbourhood $W\subset T$ of $t_0$ such that for each open neighbourhood $V\subset W$ of $t_0$ we have
$$\bigcup_{t\in V} G\cdot f_{t}=\bigcup_{i}\bigcup_{s_i\in S_i} G\cdot h^{(i)}_{s_i}.$$
Then $G\text{-}\mathrm{mod}(f)=\max \{\dim S_i\}$.
\end{corollary}

\begin{proof}
By Proposition \ref{pro1.4.4.1} an algebraic representative of any $G$-versal deformation (with section) of $f$ is $G$-complete. By Proposition \ref{pro1.4.5} $G\text{-}\mathrm{mod}(f)=G\text{-}\mathrm{mod}_F(f)$. The rest follows from Proposition \ref{pro}(ii).
\end{proof}

Note that in his classification of right simple, unimodal and bimodal singularities Arnol'd constructed parametrized {\em normal forms}, being actually $\mathcal R$-modular families of dimension 0, 1 and 2.

The above corollary makes precise (and proves) the statement by Wall \cite{Wal83} for complex analytic singularities saying: ``if the set of germs $f_t$ ($t$ small) at points $x$ near $0$ falls into finitely many families of right (resp. contact) equivalence classes, each depending on $r$ parameters (at most) then $f$ is right (resp. contact) $r$-modal (at most).''

\begin{corollary}\label{coro1.4.6}
Let $f\in K[[{\bf x}]]$ be such that $\mu(f)<\infty$ resp. $\tau(f)<\infty$. $f$ is $G$-simple iff it is of {\em finite $G$-unfolding type}, i.e. there exists a finite set $\mathcal{F}$ of $G$-classes of singularities satisfying: for any (or, equivalently, for one $G$-complete) unfolding $F({\bf x},t)$ of $f$ at $t_0$ over an affine variety $T$, there exists a Zariski open neighbourhood $V$ of $t_0\in  T$, such that the set of $G$-classes of singularities of $F({\bf x},t)$, $t \in V$, belongs to the set $\mathcal{F}$.
\end{corollary}

\begin{proposition}\label{pro1.4.4}
Let $f\in K[[{\bf x}]]=K[[x_1,\ldots, x_n]]$ be such that $\mu(f)<\infty$ (resp. $\tau(f)<\infty$) and let $G=\mathcal{R}$ (resp. $G=\mathcal{K}$). Let $m=mt(f)$ be the multiplicity of $f$. Then 
$$G\text{-}\mathrm{mod}(f) \geq \binom{n+m-1}{m}-n^2.$$
 In particular,
\begin{itemize}
\item[(i)] if $n= 2$, then $G\text{-}\mathrm{mod}(f) \geq m-3$;
\item[(ii)] if $n\geq 3$ and $m\geq 3$, then $G\text{-}\mathrm{mod}(f) \geq \dfrac{m^2+3m-16}{2}$.
\end{itemize}
\end{proposition}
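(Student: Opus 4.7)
The plan is a three-step reduction to an elementary dimension count. The key observation: both right and contact equivalences preserve multiplicity, and the induced action on the degree-$m$ leading form factors through the $n^2$-dimensional group $GL_n$ acting on the $N:=\binom{n+m-1}{m}$-dimensional space $H_m$ of degree-$m$ homogeneous polynomials. Combined with Proposition \ref{pro1.4.1}(i) this yields a bound of $N-n^2$ on the $GL_n$-modality of the leading form $F_m$ of $f$, which can then be transported back to $G\text{-}\mathrm{mod}(f)$.

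Concretely, pick $k$ sufficiently large for $f$ w.r.t. $G$ and set $V_m:=j^k(\mathfrak m^m)\subset J_k$, the closed $G_k$-invariant linear subspace of $k$-jets of multiplicity $\geq m$; since $m=\mathrm{mt}(f)$, we have $j^k f\in V_m$. Let $\pi:V_m\twoheadrightarrow H_m$ be the linear projection extracting the degree-$m$ part; as a surjective linear map between affine spaces it is open, and $\pi(j^k f)=F_m$. I would then verify the orbit containment
\[
G_k\cdot g\;\subseteq\;\pi^{-1}\bigl(GL_n\cdot\pi(g)\bigr),\qquad\forall g\in V_m,
\]
required for Proposition \ref{pro1.4.1}(iv)(1). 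For $\Phi\in\mathcal R_k$ with linear part $L_\Phi$ and $g=g_m+g_{m+1}+\cdots\in V_m$, the terms $g_{m+j}(\varphi_1,\ldots,\varphi_n)$ contribute only in degrees $\geq m+j$, so the degree-$m$ part of $\Phi(g)$ is $g_m(L_\Phi \mathbf x)\in GL_n\cdot g_m$; in the contact case the unit $u$ contributes merely the nonzero scalar $u(0)$, and $u(0)L_\Phi\in GL_n$, so the same containment holds.

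With these ingredients, the chain of inequalities, justified in turn by Proposition \ref{pro1.4.1}(iii), (iv)(1), and (i), reads
\begin{align*}
G\text{-}\mathrm{mod}(f)=G_k\text{-}\mathrm{mod}(j^k f\text{ in }J_k)
&\geq G_k\text{-}\mathrm{mod}(j^k f\text{ in }V_m)\\
&\geq GL_n\text{-}\mathrm{mod}(F_m\text{ in }H_m)\\
&\geq\dim H_m-\dim GL_n=N-n^2,
\end{align*}
where the last step uses irreducibility of both $GL_n$ and $H_m$. This proves the main bound.

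Parts (i) and (ii) are elementary. For $n=2$, $\binom{m+1}{m}-4=m-3$. For (ii), the function $n\mapsto\binom{n+m-1}{m}-n^2$ is nondecreasing on $n\geq 3$ when $m\geq 3$ (the increment $\binom{n+m-2}{m-1}-(2n-1)$ is nonnegative since $\binom{n+m-2}{m-1}\geq\tfrac{n(n+1)}{2}\geq 2n-1$ under these assumptions), so the infimum over $n\geq 3$ is attained at $n=3$ and equals $\binom{m+2}{m}-9=\tfrac{m^2+3m-16}{2}$. The main potential obstacle is the orbit containment step: one has to verify carefully that the linear part governs the leading-form action of both $\mathcal R_k$ and $\mathcal K_k$, and that the contact scalar $u(0)$ does not escape $GL_n$-orbits. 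The remaining openness and irreducibility statements are standard.
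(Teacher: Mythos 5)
Your proof follows essentially the same route as the paper: restrict to the $G_k$-invariant subspace $\mathfrak m^m/\mathfrak m^{k+1}$, project onto the leading-form space $\mathfrak m^m/\mathfrak m^{m+1}$ with its $GL(n,K)$-action, and chain Proposition \ref{pro1.4.1}(iii), (iv)(1) and (i), exactly as the paper does. The only nit is in the contact case: since $g_m(u(0)L_\Phi\mathbf x)=u(0)^m g_m(L_\Phi\mathbf x)$, to absorb the scalar $u(0)$ you should act by $\mu L_\Phi$ with $\mu^m=u(0)$ (such $\mu$ exists as $K$ is algebraically closed), not by $u(0)L_\Phi$ itself.
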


\begin{proof}
Let $k$ be sufficiently large for $f$ w.r.t. $G$. Then $k\geq m$ and $G\text{-}\mathrm{mod}(f)=G_k\text{-}\mathrm{mod}(j^k f)$. Put $X:=\mathfrak{m}^{m}/\mathfrak{m}^{k+1}\subset J_k$. It follows from Proposition \ref{pro1.4.1}(iii) that 
$$G\text{-}\mathrm{mod}(f) \text{ in } J_k\geq G\text{-}\mathrm{mod}(f)\text{ in } X.$$
Let the linear group $G':=GL(n,K)$ act on $X':=\mathfrak{m}^{m}/\mathfrak{m}^{m+1}$ by $G'\times X'\to X', \ (A,g({\bf x})) \mapsto  g(A{\bf x})$. Consider the projection $p: X\to X'$. It is easy to see that $p$ is open and $G\cdot g \subset p^{-1}\big(G'\cdot p(g)\big)$ for all $g\in X$. Then Proposition \ref{pro1.4.1}(iv) yields 
$$G\text{-}\mathrm{mod}(g) \geq G'\text{-}\mathrm{mod}(p(g)),\ \forall g\in X.$$
In order to prove the proposition, it is sufficient to show that 
$$G'\text{-}\mathrm{mod}(p(g)) \geq \binom{n+m-1}{m}-n^2, \forall g\in X.$$
Indeed, it is easy to see that 
$$\dim X'= \binom{n+m-1}{m}\text{ and }\dim GL(n,K)=n^2.$$
Hence, by Proposition \ref{pro1.4.1}(i),
\begin{eqnarray*}
GL(n,K)\text{-}\mathrm{mod}(p(g))&\geq & \dim X'-\dim GL(n,K)\\
&=&\binom{n+m-1}{m}-n^2,
\end{eqnarray*}
which completes the proof.

(i) and (ii) follow from explicit calculations.
\end{proof}

\section{Classification of right simple singularities}
In this section we classify the right simple singularities $f\in \mathfrak{m}\subset K[[x_1,\ldots,x_n]]$ for $K$ an algebraically closed field of characteristic $p>0$. The classification of contact simple singularities was done in \cite{GK90}. In contrast to $\mathrm{char}(K)=0$, where the classification of right simple and contact simple singularities coincides, the classification is very different in positive characteristic. For example, for every $p>0$, there are only finitely many classes of right simple singularities and for $p=2$ only the $A_1$-singularity in an even number of variables is right simple. The classification of right simple singularities is summarized in Theorems \ref{thm1.5.0}, \ref{thm1.5.1} and \ref{thm1.5.2}.
Note that $f\in\mathfrak{m}\setminus\mathfrak{m}^2\Leftrightarrow \mu(f)=0$ and then $f\sim_r x_1$ (hence right simple) by the implicit function theorem. We may therefore assume in the following that $f\in\mathfrak{m}^2$. 

\begin{theorem}\label{thm1.5.0}
Let $\mathrm{char } K=p > 0$.  Let $f\in\mathfrak{m}^2\subset K[[x]]$ be a univariate power series such that its Milnor number $\mu:=\mu(f)$ is finite. Then
$$\mathcal{R}\text{-}\mathrm{mod}(f)=[\mu/p]$$
is the integer part of $\mu/p$. In particular, $f$ is right simple if and only if $\mu<p$, and then $f\sim_r x^{\mu+1}$.
\end{theorem}

\begin{theorem}\label{thm1.5.1}
Let $p=\mathrm{char}(K)>2$. 
\begin{itemize}
\item[(i)] A plane curve singularity $f\in \mathfrak{m}^2\subset K[[x,y]]$ is right simple if and only if it is right equivalent to one of the following normal forms
$$$$
\centerline{\begin{tabular}[20pt]{|c|l l|}
\hline 
Name& \multicolumn{2}{l|}{Normal form}\\
\hline
$\mathrm{A}_k$&$x^2+y^{k+1}$&$1\leq k\leq p-2$\\
\hline
$\mathrm{D}_k$&$x^2y+y^{k-1}$& $4\leq k< p$\\
\hline
$\mathrm{E}_6$& $x^3+y^4$&$3<p$\\
\hline
$\mathrm{E}_7$&  $x^3+xy^3$&$3<p$\\
\hline
$\mathrm{E}_8$& $x^3+y^5$&$5<p$\\
\hline
\multicolumn{3}{c}{ }\\
\multicolumn{3}{c}{\rm{Table \ref{thm1.5.1} (i)}}
\end{tabular}}\\

\item[(ii)] A hypersurface singularity $f\in\mathfrak{m}^2\subset K[[x_1,\ldots,x_n]],n\geq 3,$ is right simple if and only if it is right equivalent to one of the following normal forms
$$$$
\centerline{\begin{tabular}[20pt]{|l| l|}
\hline 
\multicolumn{2}{|l|}{Normal form}\\
\hline 
$g(x_1,x_2)+x_3^2+\ldots+x_n^2$& $g$ is one of the singularities in Table \ref{thm1.5.1} (i)\\
\hline
\multicolumn{2}{c}{ }\\
\multicolumn{2}{c}{\rm{Table \ref{thm1.5.1} (ii)}}
\end{tabular}}
\end{itemize}
\end{theorem}

\begin{theorem}\label{thm1.5.2}
Let $p=\mathrm{char}(K)=2$. A hypersurface singularity $f\in\mathfrak{m}^2\subset K[[x_1,\ldots,x_n]]$ with $n\geq 2$, is right simple if and only if $n$ is even and if it is right equivalent to 
$$\mathrm{A}_1 : x_1x_2+x_3x_4+\ldots+x_{n-1}x_{n}.$$
\end{theorem}

The following interesting corollary follows immediately from the classification of right simple singularities.

\begin{corollary}\label{coro1.5.1}
For any $p=\mathrm{char}(K)>0$ there are only finitely many right simple singularities $f\in\mathfrak{m}^2\subset K[[x_1,\ldots,x_n]]$ (up to right equivalence). For $p=2$, either no or exactly one right simple singularity exists. 
\end{corollary}

The corollary also shows that if $f_k,k\geq 1$, is any sequences of simple singularities in positive characteristic then the sequence of Milnor numbers $\mu(f_k),k\geq 1$, is bounded. Note that this is wrong in characteristic zero since the $A_k,D_k,k\geq 1$, with Milnor number $k$, are all simple. We like to pose the following conjecture:

\begin{conjecture}\label{conj}
Let $f_k\in K[[x_1,\ldots,x_n]],\mathrm{char}(K)=p>0$, be a sequence of isolated singularities with Milnor number going to infinity if $k\to \infty$. Then the right modality of $f_k$ goes to infinity. 
\end{conjecture}

\subsection{Univariate singularities in positive characteristic}\label{sec3.1}
Let $K[[x]]$ be the ring of univariate formal power series. It is obvious that $G\text{-}\mathrm{mod}(f)=0$ for all $f\in K[[x]]$ if either $p:=\mathrm{char}(K)=0$ or $G=\mathcal{K}$. For the complete classification of univariate singularities with any modality and for the proof of Theorem \ref{thm1.5.0} we refer to \cite[Thm. 4.2.8]{Ng13}, \cite[Thm. 3.1 ]{Ng12}. Here we prove only the second part, i.e. $f$ is right simple iff $\mu<p$ and then $f\sim_r x^{\mu+1}$. The ``if''-statement follows easily from the upper semi-continuity of the Milnor number and the following fact: If $p\nmid mt(f)$ (in particular, if $\mu<p$) then $f\sim_r x^{mt(f)}$ (cf. \cite[Cor. 1]{BGM12}). 

It suffices to show that if $\mu\geq p$ then $\mathcal{R}\text{-}\mathrm{mod}(f)\geq 1$. Indeed, since $\mu\geq p$, $m:=mt(f)\geq p$. 

If $m=p$ then we may assume that $f=x^p+a_{p+1}x^{p+1}+\ldots\in K[[x]]$. Consider the unfolding
$$f_t=f+tx^{p+1}=x^p+(t+a_{p+1})x^{p+1}+\ldots.$$
of $f$. We show that  $f_t\sim_r f_{t'}$ implies $t=t'$. If $\varphi(x)=u_1x+u_2x^2+\ldots$ is in $\mathcal{R}$ then $u_1\neq 0$ and 
$$\varphi(f_t)=u_1^px^p+u_2^px^{2p}+\ldots+ (t+a_{p+1}) u_1^{p+1}x^{p+1}+\ldots.$$
If $\varphi(f_t)=f_{t'}$ then $u_1^p=1$, hence $u_1=1$, and $t=t'$. This implies that $\mathcal{R}\text{-}\mathrm{mod}(f)\geq 1$ by Corollary \ref{coro1.4.6}.

Now, assume that $m>p$ and consider the unfolding $g_t:=G(x,t):=f+t\cdot x^{p}$ of $f$ at $0$ over $\mathbb A^1$. By Proposition \ref{pro1.4.5.3}, there exists an open neighbourhood $V$ of $0$ in $\mathbb A^1$ such that 
$\mathcal{R}\text{-}\mathrm{mod}(g_{t})\leq \mathcal{R}\text{-}\mathrm{mod}(f)$ for all $t\in V$. Take a $t_0\in V\setminus \{0\}$, then the above case with $mt=p$ yields that $\mathcal{R}\text{-}\mathrm{mod}(g_{t_0})\geq 1$ since $mt(g_{t_0})=p$, and hence
$$\mathcal{R}\text{-}\mathrm{mod}(f)\geq \mathcal{R}\text{-}\mathrm{mod}(g_{t_0})\geq 1.$$

\subsection{Right simple plane curve singularities in characteristic $>2$}
Here and in the next section let $f\in K[[x,y]]$, $mt(f)$ its multiplicity, and $\mu=\mu(f)$ its Milnor number, which we assume to be finite. Let $p=\mathrm{char}(K)$.

\begin{proposition}\label{pro1.5.1}
Let $mt(f)=2$ and $p>2$.
\begin{itemize}
\item[(i)] If $\mu<p-1$, then $f\sim_r A_\mu$ and $f$ is right simple.
\item[(ii)] If $\mu\geq p-1$, then $f$ is not right simple.
\end{itemize} 
\end{proposition}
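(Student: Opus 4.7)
The plan is to reduce both parts to the univariate classification already established in subsection 3.1, via the Splitting Lemma. Since $p>2$, the quadratic part of $f$ diagonalizes over $K$; applying the Morse lemma (if the rank is $2$) or the Splitting Lemma (if the rank is $1$)—both valid for $p\neq 2$, cf.\ \cite{BGM12}—gives $f\sim_r x^2+g(y)$ with $g\in\langle y\rangle^2\subset K[[y]]$. A direct computation with the Jacobian ideal shows $\mu(f)=\mu(g)$, and if $k:=mt(g)\geq 2$ then $\mu(g)=k-1$ when $p\nmid k$ and $\mu(g)\geq k$ when $p\mid k$; in particular the value $p-1$ is skipped by the possible values of $\mu(g)$.

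For (i), assume $\mu<p-1$. The dichotomy above forces $p\nmid k$ (otherwise $\mu\geq k\geq p$), so $k=\mu+1$, and the fact $p\nmid mt(g)\Rightarrow g\sim_r y^{mt(g)}$ cited from \cite{BGM12} yields $f\sim_r x^2+y^{\mu+1}=A_\mu$. For right-simplicity, in any unfolding of $f$ every nearby $f'$ still satisfies $mt(f')\leq 2$ (upper semicontinuity of $mt$) and $\mu(f')\leq\mu<p-1$ (Lemma \ref{pro1.1.1}); if $f'$ is singular the preceding argument gives $f'\sim_r A_{\mu(f')}$, while smooth $f'$ form a single $\mathcal{R}$-class. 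Thus only the finite set $\{y,A_1,\ldots,A_\mu\}$ of classes occurs, and Corollary \ref{coro1.4.6} yields right-simplicity.

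For (ii), assume $\mu\geq p-1$; the impossibility of $\mu(g)=p-1$ noted above upgrades this to $\mu(g)=\mu\geq p$. The univariate argument of subsection 3.1 then supplies a one-parameter unfolding of $g$ with pairwise right-inequivalent members in every neighborhood of $0$: namely $g_t:=g+ty^{p+1}$ if $mt(g)=p$, and $g_t:=g+ty^p$ combined with the semicontinuity of modality (Proposition \ref{pro1.4.5.3}) if $mt(g)>p$. Lifting to the unfolding $f_t:=x^2+g_t(y)$ of $f$, the uniqueness part of the Splitting Lemma implies $f_t\sim_r f_{t'}\iff g_t\sim_r g_{t'}$, so $f_t$ produces infinitely many $\mathcal{R}$-classes in every Zariski neighborhood of $0$; Corollary \ref{coro1.4.6} then forces $\mathcal{R}\text{-}\mathrm{mod}(f)\geq 1$. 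The main obstacle is precisely this uniqueness assertion for the Splitting Lemma in characteristic $p>2$, namely that the residual univariate $g$ is well-defined up to right equivalence by $f$; this holds via the standard Tschirnhaus/Weierstrass reduction used to prove the Splitting Lemma, and, granting it, the proposition reduces entirely to the already-settled univariate case.
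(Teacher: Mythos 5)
Your proof is correct and follows essentially the same route as the paper: apply the splitting lemma to get $f\sim_r x^2+g(y)$, relate $\mu(f)=\mu(g)$ to $mt(g)$, and reduce both parts to the univariate case of subsection 3.1, transferring (non-)simplicity back to $f$ via the uniqueness of the residual part (which the paper packages as Lemmas \ref{lm1.5.3} and \ref{lm1.5.4} before citing Theorem \ref{thm1.5.0}). The only differences are cosmetic: you inline the univariate unfolding constructions and the finite class count via Corollary \ref{coro1.4.6} where the paper simply cites those results.
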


\begin{proof}
Since $mt(f)=2$ and $p>2$, it follows from the right splitting lemma (Lemma \ref{lm1.5.3.0}) that $f$ is right equivalent to $x^2+g(y)$ (with $g(y)=y^2$ if $\mathrm{crk}(f)=0$ (case $A_1$) and $mt(g)\geq 3$ if $\mathrm{crk}(f)=1$). Here $\mathrm{crk}(f)$ denotes the corank of the Hessian of $f$, see Section \ref{sec3.3}.

(i) If $\mu<p-1$ then $mt(g)<p$. By Theorem \ref{thm1.5.0}, $g\sim_r y^{mt(g)}$ and hence $f\sim_r A_\mu$. Moreover, Theorem \ref{thm1.5.0} yields that $g$ is right simple and so is $f$ by Lemma \ref{lm1.5.4}(iii). 

(ii) If $\mu\geq p-1$, then $mt(g)\geq p$. Combining Theorem \ref{thm1.5.0} and  Lemma \ref{lm1.5.4}(iii) we get that $f$ is not right simple.
\end{proof}

\begin{proposition}\label{pro1.5.2}
Let $p>3$, let $mt(f)=3$ and  and $f_3$ be the {\em tangent cone} (i.e. the homogeneous component of degree $3$) of $f$. Let $r(f_3)$ be the number of linear factors of $f_3$.
\begin{itemize}
\item[(i)] If $r(f_3)\geq 2$ then $f\sim_r x^2y+g(y)$ with $mt(g)=\mu-1\geq 3$. If additionally $4 \leq \mu < p$, then $f\sim_r D_\mu$ and $f$ is right simple.
\item[(ii)] If $r(f_3)=1$, $p=5$ and $6\leq \mu\leq 7$ then $f\sim_r E_\mu$ and $f$ is right simple.
\item[(iii)] If $r(f_3)=1$, $p>5$ and $6\leq \mu\leq 8$ then $f\sim_r E_\mu$ and $f$ is right simple.
\end{itemize} 
\end{proposition}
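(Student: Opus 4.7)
For part (i), the plan is first to normalise the cubic tangent cone. Since $r(f_3)\geq 2$, the form $f_3$ has at least two distinct linear factors over $K$, and after a linear change of coordinates and a rescaling one may assume $f_3=x^2y+\alpha y^3$ for some $\alpha\in K$. Writing $F_0:=x^2y$, the ideal $\mathfrak m\cdot j(F_0)=(x^3,\,x^2y,\,xy^2)$ has the property that every monomial in $\mathfrak m^3$ outside $K[[y]]$ lies in it, so modulo $\mathfrak m\cdot j(F_0)$ every element of $\mathfrak m^3$ is a power series in $y$ alone. Combined with the $2\mu$-determinacy of $f$ (Proposition \ref{pro1.4.3}), an inductive reduction on degree produces a right coordinate change bringing $f$ into the form $x^2y+g(y)$ with $g\in\mathfrak m^3K[[y]]$. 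A direct computation of the Milnor algebra $K[[x,y]]/(2xy,\,x^2+g'(y))$, valid since $p>2$, exhibits the basis $\{1,x,y,y^2,\ldots,y^{mt(g)-1}\}$, so $\mu(f)=mt(g)+1$, giving $mt(g)=\mu-1$. When $4\leq\mu<p$, we have $mt(g)<p$, and Theorem \ref{thm1.5.0} (equivalently the reduction $g\sim_r y^{mt(g)}$ from \cite{BGM12} valid when $p\nmid mt(g)$) then yields $g\sim_r y^{\mu-1}$ and hence $f\sim_r D_\mu$. Right simplicity of $D_\mu$ follows from Corollary \ref{coro1.4.6}: semicontinuity of $\mu$ restricts neighbours to Milnor number $\leq\mu<p$, where an induction on $\mu$ combined with Proposition \ref{pro1.5.1}(i) shows only finitely many $\mathcal R$-classes appear.

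For parts (ii) and (iii), $r(f_3)=1$ forces $f_3$ to be proportional to a cubed linear form, and a linear change gives $f=x^3g_3(y)+x^2g_2(y)+xg_1(y)+g_0(y)$ with $g_3(0)=1$, $\mathrm{ord}(g_2)\geq 2$, $\mathrm{ord}(g_1)\geq 3$, $\mathrm{ord}(g_0)\geq 4$. Since $p>3$, I first substitute $x\mapsto g_3(y)^{-1/3}x$ to bring $g_3$ to $1$, then apply a Tschirnhaus transformation $x\mapsto x-g_2(y)/3$ (again using $p>3$) to eliminate the $x^2$-term, leaving $f\sim_r x^3+xB(y)+C(y)$ with $\mathrm{ord}(B)\geq 3$ and $\mathrm{ord}(C)\geq 4$. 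A Milnor-algebra calculation shows that $\mu\in\{6,7,8\}$ places $f$ in exactly one of three configurations. If $\mu=6$, one has $\mathrm{ord}(C)=4$: rescale $y$ by a fourth root of the leading coefficient of $C$ (exists since $p>2$) to normalise $C$ to $y^4+\text{higher}$, and absorb the remainder via the inclusion $xB$ and $C-y^4\in j(x^3+y^4)\cdot\mathfrak m$, obtaining $f\sim_r E_6$. If $\mu=7$, one has $\mathrm{ord}(C)\geq 5$ and $\mathrm{ord}(B)=3$: rescale $y$ by a cube root (exists since $p>3$) to normalise $B$ to $y^3$ and absorb the residual, giving $f\sim_r E_7$. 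If $\mu=8$, one has $\mathrm{ord}(C)=5$ and $\mathrm{ord}(B)\geq 4$: a fifth root is needed, which exists only when $p>5$, giving $f\sim_r E_8$ in the regime of part (iii) and ruling $E_8$ out of part (ii). Simplicity of each $E_\mu$ is again obtained from Corollary \ref{coro1.4.6} together with semicontinuity of $\mu$.

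The principal technical obstacle is the absorption of higher-order perturbations in positive characteristic, where the classical Mather-Yau infinitesimal argument must be replaced by an explicit combination of the $2\mu$-determinacy bound (Proposition \ref{pro1.4.3}) and coordinate changes whose denominators and root extractions remain valid for the given $p$. The blanket hypothesis $p>3$ is dictated by the Tschirnhaus step; the exclusion of $E_8$ at $p=5$ traces exactly to the non-existence of $5$-th roots of a generic unit; and the lower bound $\mathcal R\text{-}\mathrm{mod}(f)\geq m-3$ from Proposition \ref{pro1.4.4} serves to rule out any further simple singularities beyond the $\mu$-ranges considered.
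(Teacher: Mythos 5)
Your overall strategy --- normalize the tangent cone, absorb higher-order terms modulo $\mathfrak m\cdot j$ using the determinacy bound of \cite{BGM12}, and read off the normal forms --- is exactly what the paper intends: its own proof of this proposition is a one-line reference to \cite[Thm. I.2.51 -- Cor. I.2.54]{GLS06} combined with \cite[Thm. 2.1]{BGM12}, with details deferred to \cite{Ng12}. Part (i) and the $E_6$, $E_7$ normalizations are essentially the intended argument, including the correct identification of where $p>3$ (Tschirnhaus, cube roots) enters.

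There are, however, two genuine defects in parts (ii)--(iii). First, the starting form $f=x^3g_3(y)+x^2g_2(y)+xg_1(y)+g_0(y)$ is not the general shape of a power series with tangent cone $x^3$: it omits $x^4$ and all higher powers of $x$ (e.g. $x^3+x^4$ does not fit), and Weierstrass preparation would only produce it up to a unit factor, i.e. up to contact rather than right equivalence. You must first absorb $x^3\mathfrak m+x^4K[[x,y]]\subset\mathfrak m\cdot\langle x^2\rangle=\mathfrak m\cdot j(x^3)$ (valid since $p\neq 3$) before the substitutions $x\mapsto g_3^{-1/3}x$ and the Tschirnhaus shift make sense. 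Second, and more seriously, your explanation of why $E_8$ drops out at $p=5$ is wrong: since $K$ is algebraically closed, Frobenius is bijective and every element of $K$ has a (unique) fifth root, so rescaling the leading coefficient of $C$ is never the obstruction. The true obstruction is that $5\mid 5$ kills the derivative: $\partial_y(y^5)=0$, hence $j(x^3+y^5)=\langle x^2\rangle$ and $\mu(x^3+y^5)=\infty$, so the tail of $C$ cannot be absorbed into $\mathfrak m\cdot j$. Indeed your own case distinction ``$\mu=8$ iff $\mathrm{ord}(C)=5$ and $\mathrm{ord}(B)\geq 4$'' already fails at $p=5$, where $\mathrm{ord}(C)=5$ forces $\mathrm{ord}(C')\geq 5$ and such an $f$ has Milnor number at least $10$ (e.g. $\mu(x^3+xy^4+y^5)=10$ in characteristic $5$); the non-simplicity of the case $r(f_3)=1$, $\mu\geq 8$, $p=5$ is a separate argument, carried out in Proposition \ref{pro1.5.3}(iv) via an explicit modulus, not a failed root extraction. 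A smaller point: in (i) your Milnor-algebra basis $\{1,x,y,\ldots,y^{mt(g)-1}\}$ presupposes $p\nmid mt(g)$; this is automatic in the range $\mu<p$ relevant for simplicity, but the unconditional claim $mt(g)=\mu-1$ requires an additional normalization when $p\mid mt(g)$.
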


\begin{proof}
This may be proved in much the same way as \cite[Thm. I.2.51, Cor. I.2.52, Thm. I.2.53, Cor. I.2.54]{GLS06}, by applying the finite determinacy theorem in positive characteristic \cite[Thm. 2.1]{BGM12}. For details we refer to \cite[Prop. 4.3.5]{Ng13}.
\end{proof}

\begin{proposition}\label{pro1.5.3}
Let $mt(f)=3$. Let $r(f_3)$ be the number of linear factors of $f_3$. Then $f$ is not right simple if 
\begin{itemize}
\item[(i)]  either $p=3$;
\item[(ii)] or $p>3$, $r(f_3)\geq 2$ and $\mu\geq p$;
\item[(iii)] or $p>5$, $r(f_3)=1$ and $\mu>8$;
\item[(iv)] or $p=5$, $r(f_3)=1$ and $\mu\geq 8$.
\end{itemize} 
\end{proposition}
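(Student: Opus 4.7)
In each of the four cases, the goal is $\mathcal{R}\text{-}\mathrm{mod}(f)\geq 1$. By Corollary \ref{coro1.4.6}, it suffices to exhibit an unfolding $F(x,y,t)$ of $f$ over a one-dimensional affine base whose members $f_t$ represent infinitely many distinct $\mathcal{R}$-classes in every Zariski neighbourhood of $t=0$; equivalently, by the semicontinuity Proposition \ref{pro1.4.5.3}, it is enough to deform $f$ to some $f'$ already known to satisfy $\mathcal{R}\text{-}\mathrm{mod}(f')\geq 1$, typically a univariate $g$ with $mt(g)\geq p$ to which Theorem \ref{thm1.5.0} applies.

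For case (ii), Proposition \ref{pro1.5.2}(i) provides the pre-normal form $f\sim_r x^2y+g(y)$ with $mt(g)=\mu-1\geq p-1$. When $\mu\geq p+1$, so $mt(g)\geq p$, Theorem \ref{thm1.5.0} furnishes a one-parameter family $g_t$ of pairwise $\mathcal{R}$-inequivalent univariate singularities; I would lift it to $f_t:=x^2y+g_t(y)$ and verify that pairwise inequivalence persists by using the fact that the tangent cone $x^2y$ pins down the linear part of any purported right equivalence $f_t\sim_r f_s$ to the toric form $(x,y)\mapsto(ax,a^{-2}y)$, after which tracking higher orders reduces the problem to the univariate inequivalence already known. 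The boundary subcase $\mu=p$, where $f\sim_r x^2y+y^{p-1}$, is handled by the univariate-style trick of Section 3.1: the family $f_t:=x^2y+y^{p-1}+t\cdot y^p$ has a Frobenius-invisible perturbation ($\partial_y y^p=0$ in characteristic $p$), so a direct computation in a sufficiently large jet space shows that $t$ survives as a modulus under the action of $\mathcal{R}_k$. Cases (iii) and (iv) extend the $E$-type analysis of Proposition \ref{pro1.5.2}(ii)-(iii), which itself adapts \cite[Thm.\ I.2.53, Cor.\ I.2.54]{GLS06} via the finite determinacy theorem \cite[Thm.\ 2.1]{BGM12}; the strategy is to run the step-by-step normalization procedure until it stalls, i.e.\ past the $(\mu,p)$ ranges producing $E_6,E_7,E_8$, at which point a monomial appears whose coefficient cannot be cleared by any right equivalence, and declaring that coefficient to be the unfolding parameter yields the required modular family.

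Case (i), $p=3$, is the most delicate because the hypothesis $p>3$ of Propositions \ref{pro1.5.1} and \ref{pro1.5.2} is used throughout to apply Tschirnhaus-style eliminations of cubic terms. I would split by $r(f_3)$: if $r(f_3)\geq 2$, then the pre-form $x^2y+g(y)$ is again obtainable with $mt(g)\geq 3 = p$, so the case (ii) argument applies verbatim (invoking Theorem \ref{thm1.5.0} for $g$). If $r(f_3)=1$, i.e.\ $f_3=x^3$, then $\partial_x(x^3)=0$ in characteristic $3$ forces rich higher-order structure in any $f$ with $\mu(f)<\infty$ and this tangent cone; I would construct an explicit modular unfolding of the form $f_t=f+t\cdot xy^k$, with $k$ determined by the subleading terms of $f$, and verify non-equivalence by passing to a sufficiently large jet space and tracking the action of $\mathcal{R}_k$.

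The main obstacle throughout will be the verification of pairwise $\mathcal{R}$-inequivalence of the members of the constructed unfolding. Because $\mathcal{R}$ acts by arbitrary analytic coordinate changes and because characteristic $p$ annihilates certain partial derivatives, one must track carefully which monomial coefficients survive all possible normalizations, a combinatorial bookkeeping of the same flavour as in the univariate proof of Theorem \ref{thm1.5.0} carried out in Section 3.1.
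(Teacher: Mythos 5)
Your overall strategy (produce a modular unfolding, or deform to something already known to be non-simple, then use semicontinuity) is the right framework, but at the points where the actual work has to happen your argument either stalls or rests on claims that are not justified and, in at least one place, doubtful. The paper's proof of (i) and (ii) uses a trick you miss entirely: it deforms $f$ to a singularity of \emph{multiplicity} $2$ whose Milnor number is still $\geq p-1$, and then invokes the already-proven Proposition \ref{pro1.5.1}(ii). For (i) one simply takes $f_t=f+t\,x^2$ and checks $\mu(f_t)>2=p-1$ for $t\neq 0$ (this uses $p=3$); for (ii) one takes, after reducing to $h=x^2y+g(y)$, the unfolding $h+tx^2+ty^p$ if $\mu>p$ and $h+at^2x^2+at\,xy^{(p-1)/2}$ if $\mu=p$, and checks $\mu(h_t)\geq p$ for $t\neq 0$. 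This completely bypasses the ``track which coefficients survive all normalizations'' bookkeeping that you correctly identify as the main obstacle but never carry out.

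Concretely, the gaps in your version are these. In (ii) you assert that pairwise inequivalence of the univariate family $g_t$ lifts to $x^2y+g_t(y)$; no lemma in the paper gives this (Lemma \ref{lm1.5.4} covers only the stabilization $x_n^2+f'$, not $x^2y+g(y)$), and pinning the linear part of an equivalence to $(x,y)\mapsto(ax,a^{-2}y)$ does not control the higher-order mixing of $x$ and $y$, so this is a genuine unproved step. Your boundary family $x^2y+y^{p-1}+ty^p$ is actually suspect: the substitution $y\mapsto y+ty^2$ changes the coefficient of $y^p$ by $-t$ at the cost of a term $tx^2y^2$, so it is far from clear that $t$ is a modulus, and the paper indeed uses a quite different perturbation there. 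In (i) with $r(f_3)\geq 2$ you invoke the normal form $x^2y+g(y)$, but Proposition \ref{pro1.5.2}(i) is proved only for $p>3$ and the reduction is exactly where characteristic $3$ causes trouble, so ``the case (ii) argument applies verbatim'' is not available; your $r(f_3)=1$, $p=3$ discussion is only a declaration of intent. Finally, for (iii) and (iv) ``run the normalization until it stalls'' is not a proof: the paper handles (iii) by the classical adjacency argument of \cite[Thm.\ I.2.55]{GLS06} and (iv) by an explicit computation showing that for $g=x^2y+\alpha y^5+\beta xy^4+h$ and the unfolding $g_t=g+t\,xy^4$ any equivalence $g_t\sim_r g_{t'}$ forces $(\beta+t)^3=(\beta+t')^3$, so each class meets at most three parameter values and Corollary \ref{coro1.4.6} applies; nothing of comparable precision appears in your sketch.
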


\begin{proof}
(i) We consider the unfolding $F(x,y,t)=f+t\cdot x^2$ of $f$ at $0$ over $\mathbb A^1$. Since $mt(f)=3$ and since $p=3$, it is easy to see that $\mu(f_t)>2$ for all $t\neq 0$. Proposition \ref{pro1.5.1}(ii) yields that $f_{t}, t\neq 0$, is not right simple and hence neither is $f$ by Proposition \ref{pro1.4.5.3}.

(ii) By Proposition \ref{pro1.5.2}(i), $f\sim_r x^2y+g(y)$ with $mt(g)=\mu-1$. It suffices to show that $h:=x^2y+g(y)$ is not right simple. We write 
$$g(y)=a\cdot y^{\mu-1}+\text{ higher trems}, a\neq 0$$
and consider the unfolding 
\begin{equation*}
h_t:=H(x,y,t)=
\begin{cases}
x^2y+g(y)+t x^2+ t y^p&\text{ if } \mu> p\\
x^2y+g(y)+at^2x^2+ 2at xy^{(p-1)/2}&\text{ if } \mu=p
\end{cases}
\end{equation*}
of $h$ at $0$ over $\mathbb A^1$. It is easy to see that $\mu(h_t)\geq p$ for all $t\neq 0$ (in fact, $\mu(h_t)= p$ for almost all $t$). It follows from Proposition \ref{pro1.5.1} that $h_t, t\neq 0$, is not right simple and hence neither is $h$ due to Proposition \ref{pro1.4.5.3}.

(iii) This is done by the same argument as in \cite[Thm. I.2.55(2)(ii)]{GLS06}.

(iv) Since $r(f_3)=1$ and $\mu\geq 8$, using the same argument as in  \cite[Thm. I.2.53]{GLS06}, we get
$$f\sim_r g:= x^2y+\alpha y^5+\beta xy^4+h(x,y)$$
with $\alpha, \beta \in K$ and $h\in \mathfrak{m}^6$. Consider the unfolding $g_t:=G(x,y,t)=g(x,y)+t\cdot xy^4$ of $g$ at $0$ over $\mathbb A^1$ and assume that $g_t\sim_r g_{t'}$, i.e. there exists an automorphism
\begin{eqnarray*}
\Phi : K[[x,y]]&\longrightarrow & K[[x,y]]\\
x&\mapsto & \varphi=\sum a_{ij} x^iy^j\\
y&\mapsto & \psi=\sum b_{ij} x^iy^j
\end{eqnarray*}
such that $g_{t}(x,y)=g_{t'}(\varphi,\psi)$. By a simple calculation we conclude that $(\beta+t)^3=(\beta+t')^3$ and hence, for fixed $t$, $g_t\sim_r g_{t'}$ for at most three values of $t'$. It follows from Corollary \ref{coro1.4.6} that $g$ is not right simple and hence neither is $f$.
\end{proof}

\begin{proof}[Proof of Theorem \ref{thm1.5.1}(i)]
It follows from Propositions \ref{pro1.4.4}, \ref{pro1.5.1}, \ref{pro1.5.2} and \ref{pro1.5.3}.
\end{proof}

\subsection{Right simple hypersurface singularities in characteristic $>2$}\label{sec3.3}
Our aim is to prove Theorem \ref{thm1.5.1}(ii). Let $f\in K[[{\bf x}]]=K[[x_1,\ldots,x_n]]$. We denote by
$$H(f):=\big( \frac{\partial^2 f}{\partial x_i\partial x_j}(0)\big)_{i,j=1,\ldots,n}\in \mathrm{Mat}(n\times n, K)$$
the {\em Hessian (matrix)} of $f$ and by $\mathrm{crk}(f):=n-\mathrm{rank}(H(f))$ the {\em corank} of $f$. 

\begin{lemma}[Right splitting lemma in characteristic different from 2] \label{lm1.5.3.0}
If $f\in \mathfrak{m}^2\subset K[[{\bf x}]], \mathrm{char}(K)>2,$ has corank $\mathrm{crk}(f)=k\geq 0$, then 
$$f\sim_r g(x_1,\ldots, x_k)+x_{k+1}^2+\ldots + x_{n}^2$$ 
with $g\in \mathfrak{m}^3$. $g$ is called the {\em residual part} of $f$, it is uniquely determined up to right equivalence.  
\end{lemma}

\begin{proof}
cf.  \cite[Thm. I.2.47]{GLS06}. The proof in \cite{GLS06} is given for $K=\Bbb C$ but works in characteristic different from 2.
\end{proof}

\begin{lemma}\label{lm1.5.3}
Let $p=\mathrm{char}(K)>2$ and let 
$$f_i(x_1,\ldots,x_n)=x_n^2+f'_i(x_1,\ldots, x_{n-1})\in\mathfrak{m}^2\subset  K[[x_1,\ldots,x_n]],\ i=1,2.$$
Then $f_1\sim_r f_2$ if and only if $f'_1\sim_r f'_2$.
\end{lemma}

\begin{proof}
The direction, $f'_1\sim_r f'_2\Rightarrow f_1\sim_r f_2$ is obvious. We now assume that $f_1\sim_r f_2$. Then $\mathrm{crk}(f_1)=\mathrm{crk}(f_2):=k$ and therefore $\mathrm{crk}(f'_1)=\mathrm{crk}(f'_2)=k$. It follows from Lemma \ref{lm1.5.3.0} that 
$$f'_i\sim_r g_i(x_1,\ldots, x_k)+x_{k+1}^2+\ldots + x_{n-1}^2,\ i=1,2.$$
and hence
$$f_i\sim_r g_i(x_1,\ldots, x_k)+x_{k+1}^2+\ldots + x_{n-1}^2+x_n^2.$$
This implies 
$$g_1(x_1,\ldots, x_k)+x_{k+1}^2+\ldots + x_n^2\sim_r g_2(x_1,\ldots, x_k)+x_{k+1}^2+\ldots + x_n^2$$
since $f_1\sim_r f_2$. The uniqueness of $g_i$ shows that $g_1\sim_r g_2$, i.e. there exists an automorphism $\Phi'\in Aut_K(K[[x_1,\ldots,x_k]])$ such that $\Phi'(g_1)=g_2$. The automorphism 
\begin{eqnarray*}
\Phi: K[[x_1,\ldots ,x_{n-1}]]&\longrightarrow & K[[x_1,\ldots ,x_{n-1}]]\\
x_i&\mapsto & \Phi'(x_i), i=1,\ldots, k\\
x_j &\mapsto & x_j, j=k+1,\ldots, n-1
\end{eqnarray*}
yields that 
$$g_1(x_1,\ldots, x_k)+x_{k+1}^2+\ldots + x_{n-1}^2\sim_r g_2(x_1,\ldots, x_k)+x_{k+1}^2+\ldots + x_{n-1}^2$$
and hence $f'_1\sim_r f'_2$. This completes the proof.
\end{proof}

\begin{lemma}\label{lm1.5.4}
Let $p=\mathrm{char}(K)>2, n\geq 2,$ and let 
$$f(x_1,\ldots,x_n)=x_n^2+f'(x_1,\ldots, x_{n-1})\in\mathfrak{m}^2\subset K[[x_1,\ldots,x_n]]$$ 
be such that $\mu(f)<\infty$. 
\begin{itemize}
\item[(i)] Let $F'({\bf x}',{t})\in \langle x_1,\ldots,x_{n-1}\rangle\subset  \mathcal{O}(T)[[x_1,\ldots,x_{n-1}]]$ be an unfolding of $f'$ at ${t}_0$ over an affine variety $T$ and let $F({\bf x},t)=x_n^2+F'({\bf x}',t)$. Then 
$$\mathcal{R}\text{-}\mathrm{mod}_F(f)=\mathcal{R}\text{-}\mathrm{mod}_{F'}(f').$$
\item[(ii)] We have
$$\mathcal{R}\text{-}\mathrm{mod}(f) \text{ in } K[[x_1,\ldots,x_n]]=\mathcal{R}\text{-}\mathrm{mod}(f') \text{ in } K[[x_1,\ldots,x_{n-1}]].$$
\end{itemize}
\end{lemma}

\begin{proof}
Let $k$ be sufficiently large for $f$ and for $f'$ w.r.t. $\mathcal{R}$. Let $\mathfrak{m}'$ be the maximal ideal in $K[[{\bf x}']]$ and let $J'_k:=K[[{\bf x}']]/\mathfrak{m}'^{k+1}$.

(i) Consider the morphisms
\begin{displaymath}
\begin{array}{rcccrcc}
h_k:T&\longrightarrow & J_k &\text{ and } &h'_k:T&\longrightarrow & J'_k\\
t&\mapsto & j^k f_t & &t&\mapsto & j^k f'_t .
\end{array}
\end{displaymath}
and $p:J_k\to J'_k$ the natural projection. Then $h'_k=p\circ h_k$ and 
$$h_k^{-1}(\mathcal{R}\cdot h_k(t))=h'^{-1}_k(\mathcal{R}\cdot h'_k(t))$$ by Lemma \ref{lm1.5.3}. It follows from Corollary \ref{coro1.4.3} that $\mathcal{R}\text{-}\mathrm{mod}_{h_k}(t_0)=\mathcal{R}\text{-}\mathrm{mod}_{h'_k}(t_0),$ and hence $\mathcal{R}\text{-}\mathrm{mod}_F(f)=\mathcal{R}\text{-}\mathrm{mod}_{F'}(f').$

(ii) Let $\{g'_1({\bf x}'),\ldots,g'_l({\bf x}') \}$ be a system of generators of $\mathfrak{m}'/\mathfrak{m}'\cdot j(f')$. Then $\{x_n,g_1({\bf x}),\ldots,g_l({\bf x}) \}$ with $g_i({\bf x})=g'_i({\bf x}')$, is a system of generators of $\mathfrak{m}/\mathfrak{m}\cdot j(f)$.  Proposition \ref{pro1.4.4.1} yields that 
$$F'({\bf x}',t)=f'+\sum_{i=1}^l t_i g'_i({\bf x}') \text{ (resp. } F_1({\bf x},t)= f+\sum_{i=1}^l t_i g_i({\bf x})+t_{l+1} x_{n})$$ 
is a right complete unfoldings of  $f'$ (resp. of $f$) over $\mathbb A^{l}$ (resp. $\mathbb A^{l+1}$), i.e.
$$\mathcal{R}\text{-}\mathrm{mod}(f)=\mathcal{R}\text{-}\mathrm{mod}_{F_1}(f)\ (\text{resp. } \mathcal{R}\text{-}\mathrm{mod}(f') \text{ in } K[[{\bf x}']]=\mathcal{R}\text{-}\mathrm{mod}_{F'}(f'))$$
due to Proposition \ref{pro1.4.5}. Note that $F_1({\bf x},t)\sim_r x_1$ and therefore $\mathcal{R}\text{-}\mathrm{mod}(F_1({\bf x},t))=0$ for all $t=(t_1,\ldots,t_{l+1})\in \mathbb A^{l+1}$ with $t_{l+1}\neq 0$. 

Consider the inclusion $\mathbb A^{l}\subset \mathbb A^{l+1}, t=(t_1,\ldots,t_{l})\mapsto (t_1,\ldots,t_{l},0)$ and the unfolding 
$$F=f+\sum_{i=1}^l t_i g_i({\bf x})$$ 
of $f$ at $0$ over $\mathbb A^{l}$. Since $\mathcal{R}\text{-}\mathrm{mod}(F_1({\bf x},t))=0$ for all $t\in \mathbb A^{l+1}\setminus \mathbb A^{l}$, using the same argument as in the proof of Proposition \ref{pro1.4.1}(iii) (see also \cite[Prop. 3.2.6]{Ng13}) we obtain that $\mathcal{R}\text{-}\mathrm{mod}_{F_1}(f)=\mathcal{R}\text{-}\mathrm{mod}_{F}(f)$. This implies, by (i), that $$\mathcal{R}\text{-}\mathrm{mod}_{F_1}(f)=\mathcal{R}\text{-}\mathrm{mod}_{F}(f)=\mathcal{R}\text{-}\mathrm{mod}_{F'}(f')$$
and hence $\mathcal{R}\text{-}\mathrm{mod}(f)=\mathcal{R}\text{-}\mathrm{mod}(f')$.
\end{proof}

\begin{proof}[Proof of Theorem \ref{thm1.5.1}(ii)]
The ``if"-statement follows from Theorem \ref{thm1.5.1}(i) and Lemma \ref{lm1.5.4}. We now consider any simple singularity $f\in\mathfrak{m}^2\subset K[[{\bf x}]]$. Then, by the splitting lemma, 
$$f\sim_r f'(x_1,\ldots, x_k)+x_{k+1}^2+\ldots + x_{n}^2$$ 
with $f'\in \langle x_1,\ldots, x_k \rangle^3$ and $k=\mathrm{crk}(f)$. Again by Lemma \ref{lm1.5.4}, 
$$\mathcal{R}\text{-}\mathrm{mod}(f')=\mathcal{R}\text{-}\mathrm{mod}(f)=0.$$
It follows from Proposition \ref{pro1.4.4} that
$$0=\mathcal{R}\text{-}\mathrm{mod}(f') \geq \binom{m+k-1}{m}-k^2,$$
where $m=mt(f')\geq 3$. This implies that $k\leq 2$, i.e.
 $$f\sim_r g(x_1,x_2)+x_{3}^2+\ldots + x_{n}^2,$$
for some simple singularity $g\in K[[x_1,x_2]]$. The proof thus follows from Theorem \ref{thm1.5.0}, \ref{thm1.5.1}(i) and Lemma \ref{lm1.5.3}.
\end{proof}

\subsection{Right simple hypersurface singularities in characteristic 2}
Let $p=\mathrm{char}(K)=2$ and let $n\geq 2$.

\begin{lemma}[Right splitting lemma in characteristic 2]\label{lm1.5.5}
Let $f\in \mathfrak{m}^2\subset K[[x_1,\ldots,x_n]], n\geq 2$. Then there exists an $l$, $0\leq 2l\leq n$ such that
$$f\sim_r x_1x_2+x_3x_4+\ldots+x_{2l-1}x_{2l}+g(x_{2l+1},\ldots,x_n)$$
with $g\in \langle x_{2l+1},\ldots,x_{n} \rangle^3$ or $g\in x_{2l+1}^2+\langle x_{2l+1},\ldots,x_{n} \rangle^3$ if $2l<n$. $g$ is called the {\em residual part} of $f$, it is uniquely determined up to right equivalence.
\end{lemma}

\begin{proof}
It follows easily from \cite[Lemmas 1 and 2]{GK90}.
\end{proof}

\begin{lemma}\label{lm1.5.6}
Let $\mu(f)<\infty$ and
$$f(x_1,\ldots,x_n)=x_{n-1}x_n+f'(x_1,\ldots, x_{n-2})\in\mathfrak{m}^2\subset K[[x_1,\ldots,x_n]].$$
Then 
$$\mathcal{R}\text{-}\mathrm{mod}(f) \text{ in } K[[x_1,\ldots,x_n]]=\mathcal{R}\text{-}\mathrm{mod}(f') \text{ in } K[[x_1,\ldots,x_{n-2}]].$$
\end{lemma}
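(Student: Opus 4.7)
The plan is to follow the strategy of Lemma \ref{lm1.5.4}(ii), with the hyperbolic pair $x_{n-1}x_n$ playing the role of $x_n^2$. Two ingredients are required: first, a characteristic-$2$ analogue of Lemma \ref{lm1.5.3}, asserting that for $f_i = x_{n-1}x_n + f'_i(x_1,\ldots,x_{n-2}) \in \mathfrak{m}^2$ ($i=1,2$),
\[
f_1 \sim_r f_2 \iff f'_1 \sim_r f'_2;
\]
second, the construction of right complete unfoldings via Proposition \ref{pro1.4.4.1}.

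For the first ingredient, the ``$\Leftarrow$'' direction is immediate by extending $\Phi' \in \mathrm{Aut}_K(K[[x_1,\ldots,x_{n-2}]])$ to $\Phi \in \mathrm{Aut}_K(K[[x_1,\ldots,x_n]])$ fixing $x_{n-1}$ and $x_n$. For ``$\Rightarrow$'', apply the splitting lemma \ref{lm1.5.5} to each $f'_i$ to write $f'_i \sim_r \sum_{j=1}^{m_i}x_{2j-1}x_{2j} + h_i$ with residual part $h_i$. Then
\[
f_i \sim_r x_{n-1}x_n + \sum_{j=1}^{m_i}x_{2j-1}x_{2j} + h_i
\]
is a right splitting of $f_i$ (after relabelling variables) with one additional hyperbolic pair and the \emph{same} residual part $h_i$. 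The hypothesis $f_1 \sim_r f_2$ together with the uniqueness clause of Lemma \ref{lm1.5.5} forces $m_1 = m_2$ and $h_1 \sim_r h_2$, and reversing the splittings yields $f'_1 \sim_r f'_2$.

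For the second ingredient, let $\{g'_1,\ldots,g'_l\}$ generate $\mathfrak{m}'/\mathfrak{m}' j(f')$. Since $j(f) = j(f') + \langle x_{n-1}, x_n\rangle$ in $K[[{\bf x}]]$, a direct check shows that $\{x_{n-1}, x_n, g_1,\ldots,g_l\}$, with $g_i({\bf x}) := g'_i({\bf x}')$, generates $\mathfrak{m}/\mathfrak{m} j(f)$. Proposition \ref{pro1.4.4.1} then gives right complete unfoldings
\[
F'({\bf x}',{\bf t}) = f' + \sum_{i=1}^l t_i g'_i,\qquad F_1({\bf x},{\bf t},s_1,s_2) = f + \sum_{i=1}^l t_i g_i + s_1 x_{n-1} + s_2 x_n.
\]
Set $F({\bf x},{\bf t}) := F_1({\bf x},{\bf t},0,0)$. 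As in the proof of Lemma \ref{lm1.5.4}(ii), fibers of $F_1$ with $(s_1,s_2)\neq(0,0)$ have a nonzero linear term, hence belong to the open $\mathcal R_k$-orbit of smooth germs in $J_k$ and contribute nothing to the modality; one obtains $\mathcal{R}\text{-}\mathrm{mod}_{F_1}(f) = \mathcal{R}\text{-}\mathrm{mod}_F(f)$. Next, the two morphisms $h_k: \mathbb A^l \to J_k$, ${\bf t} \mapsto j^k F$, and $h'_k: \mathbb A^l \to J'_k$, ${\bf t} \mapsto j^k F'$, are related by the natural projection $J_k \to J'_k$, and the first ingredient yields $h_k^{-1}(\mathcal R_k\cdot h_k({\bf t})) = h'^{-1}_k(\mathcal R'_k\cdot h'_k({\bf t}))$ for all ${\bf t}$. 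Corollary \ref{coro1.4.3} then gives $\mathcal{R}\text{-}\mathrm{mod}_{F}(f) = \mathcal{R}\text{-}\mathrm{mod}_{F'}(f')$, and Proposition \ref{pro1.4.5}(ii) applied to the complete unfoldings $F_1$ and $F'$ closes the chain
\[
\mathcal{R}\text{-}\mathrm{mod}(f) = \mathcal{R}\text{-}\mathrm{mod}_{F_1}(f) = \mathcal{R}\text{-}\mathrm{mod}_F(f) = \mathcal{R}\text{-}\mathrm{mod}_{F'}(f') = \mathcal{R}\text{-}\mathrm{mod}(f').
\]

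The main obstacle is the first ingredient: in characteristic $2$ the residual part in Lemma \ref{lm1.5.5} has two possible shapes (pure cubic, or $x^2$ plus cubic), so one must rely carefully on the uniqueness-up-to-$\sim_r$ clause to identify the residuals of $f_1, f_2$ with those of $f'_1, f'_2$. All other steps are routine adaptations of the characteristic $\neq 2$ arguments in Lemma \ref{lm1.5.4}.
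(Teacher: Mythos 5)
Your proposal is correct and follows essentially the same route as the paper, whose entire proof of this lemma is the remark ``by using the same argument as in the proof of Lemma \ref{lm1.5.4}''. You have correctly identified and supplied the two details that this adaptation actually requires, namely the cancellation statement $x_{n-1}x_n+f'_1\sim_r x_{n-1}x_n+f'_2\iff f'_1\sim_r f'_2$ (proved, as in Lemma \ref{lm1.5.3}, from the uniqueness clause of the relevant splitting lemma, here Lemma \ref{lm1.5.5}) and the comparison of the complete unfoldings $F_1$, $F$ and $F'$ via Proposition \ref{pro1.4.4.1}, Corollary \ref{coro1.4.3} and Proposition \ref{pro1.4.5}.
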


\begin{proof}
By using the same arguments as in the proof of Lemma \ref{lm1.5.4}.
\end{proof}

\begin{remark}{\rm
Since $\mu(x_1x_2)=1$, $x_1x_2\in K[[x_1,x_2]]$ is right 2-determined and any unfolding of $x_1x_2$ is either right equivalent to itself or smooth. Hence $x_1x_2$ is right simple.
}\end{remark}

\begin{proposition}\label{pro1.5.8}
Let $f\in \mathfrak{m}^2\subset K[[{\bf x}]]$ with $\mu(f)<\infty$. Then $f$ is not right simple if
\begin{itemize}
\item[(i)] either $f=x_1^2+g(x_1, \ldots, x_n)\in K[[{\bf x}]]$ with $g\in\mathfrak{m}^3$,
\item[(ii)] or $f\in \mathfrak{m}^3$.
\end{itemize}
\end{proposition}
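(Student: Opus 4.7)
The strategy for both parts is to apply Corollary \ref{coro1.4.6}: to show $f$ is not right simple it suffices to exhibit a one-parameter unfolding of $f$ whose fibres belong to infinitely many right equivalence classes, or, where possible, to obtain a positive lower bound on $\mathcal{R}\text{-}\mathrm{mod}(f)$ directly from Proposition \ref{pro1.4.4}.

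For part (ii), let $m=mt(f)\geq 3$. If $n\geq 3$ then Proposition \ref{pro1.4.4}(ii) yields $\mathcal{R}\text{-}\mathrm{mod}(f)\geq (m^2+3m-16)/2\geq 1$, and if $n=2$ and $m\geq 4$ then Proposition \ref{pro1.4.4}(i) yields $\mathcal{R}\text{-}\mathrm{mod}(f)\geq m-3\geq 1$; in both situations $f$ is not simple. The remaining sub-case is $n=2$, $m=3$, which I would handle directly by analysing the binary cubic tangent cone $f_3\in K[x_1,x_2]$. According to its factorisation in $\mathrm{char}\,2$ (three distinct linear factors, a double linear factor, or a triple factor) I would construct an explicit $1$-parameter family $f_t=f+t\cdot\phi$ with an appropriate monomial $\phi$, exploit the vanishing of mixed terms in squares in $\mathrm{char}\,2$, and verify that distinct values of $t$ yield pairwise non-equivalent fibres.

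For part (i), write $f=x_1^2+g$ with $g\in\mathfrak{m}^3$. In $\mathrm{char}\,2$ the term $x_1^2$ has vanishing partial derivative in $x_1$, so $j(f)=j(g)$, and in particular $g$ itself has finite Milnor number; moreover $g$ must involve $x_1$ non-trivially, otherwise $\partial_1 f=0$ and $\mu(f)=\infty$. I would first apply Lemma \ref{lm1.5.5} together with Lemma \ref{lm1.5.6} to strip off any $x_ix_j$-pairs that split off $f$, reducing to a residual form still of type (i) in which no further splitting is possible. Then I would construct the $1$-parameter unfolding $F(\mathbf{x},t)=f+t\cdot\phi$ where $\phi$ is a monomial of odd $x_1$-degree supported in $g$. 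Writing a candidate equivalence $\Phi\in\mathcal{R}$ as $\Phi(x_1)=u_1x_1+\sum_{j\geq 2}u_jx_j+\text{h.o.t.}$, one has $\Phi(x_1)^2=u_1^2x_1^2+\sum_{j\geq 2}u_j^2x_j^2+\text{h.o.t.}$ because in $\mathrm{char}\,2$ all cross terms in a square vanish; matching the coefficient of $x_1^2$ on both sides of $\Phi(f_t)=f_{t'}$ forces $u_1^2=1$, hence $u_1=1$ as $K$ is algebraically closed of characteristic $2$. A careful accounting of the contributions of $\phi$ versus those of $\Phi(g)$ to the coefficient of the distinguished monomial then forces $t=t'$ at worst up to a finite ambiguity; Corollary \ref{coro1.4.6} concludes that $f$ is not right simple.

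The main obstacle is the explicit choice of $\phi$ and the final coefficient comparison in case (i) (and analogously in the $n=2$, $m=3$ sub-case of (ii)). The $\mathrm{char}\,2$ squaring rules simultaneously help (cross terms in $\Phi(x_1)^2$ vanish, pinning down $u_1=1$) and hurt (many candidate moduli are themselves squares and are thus absorbed by the freedom in $\Phi$). One must pick $\phi$ based on the detailed structure of the low-order part of $g$, for instance by choosing the lowest-order monomial of odd $x_1$-degree that survives all such absorption, and then track the higher-order corrections in $\Phi$ carefully enough to conclude that the $t$-parameter is genuinely transversal to the orbit of $f$.
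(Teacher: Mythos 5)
Your overall strategy---exhibit a family meeting infinitely many right classes and invoke Corollary \ref{coro1.4.6}---is legitimate, and your key observation for (i), that in characteristic $2$ squaring kills cross terms so any $\Phi$ with $\Phi(f_t)=f_{t'}$ must satisfy $\Phi(x_1)\equiv x_1$ modulo higher-order terms, is exactly the right starting point. But as written the proof has two genuine gaps. First, in (i) the entire substance of the argument---the choice of $\phi$ and the verification that $t\mapsto f+t\phi$ meets infinitely many orbits---is deferred, and it is not clear that this plan can succeed as stated: having $\mathcal{R}\text{-}\mathrm{mod}(f)\geq 1$ does not guarantee that a modulus is visible along a single monomial line through the given $f$ itself (compare the univariate case in the paper, where for $mt(f)>p$ one must first perturb to multiplicity $p$ and only then sees the modulus, concluding by semicontinuity); you yourself flag the absorption problem but do not resolve it. Second, in (ii) the sub-case $n=2$, $mt(f)=3$ (where Proposition \ref{pro1.4.4} gives only the trivial bound $m-3=0$) is again only a programme---a case analysis of binary cubics in characteristic $2$ with families still to be constructed---not a proof.

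The paper closes both gaps without constructing any explicit family. For (i) it turns your observation about $\Phi(x_1)$ into a dimension count: the $\mathcal{R}_k$-orbit of any $y\in Y:=x_1^2+\mathfrak{m}^3/\mathfrak{m}^{k+1}$ meets $Y$ exactly in the orbit of the subgroup $\mathcal{H}$ of automorphisms fixing $x_1$; projecting to $3$-jets and applying Proposition \ref{pro1.4.1}(iv) and then (i) gives
$\mathcal{R}\text{-}\mathrm{mod}(f)\geq \dim\big(x_1^2+\mathfrak{m}^3/\mathfrak{m}^4\big)-\dim\mathcal{H}'=\binom{n+2}{3}-n(n-1)\geq 1$.
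For (ii) it simply perturbs: $f+tx_1^2$ is of type (i) for $t\neq 0$, so by semicontinuity of the modality (Proposition \ref{pro1.4.5.3}) $f$ is not right simple either; this disposes of your $n=2$, $m=3$ case (and all the others) at once. If you want to salvage your approach, the cheapest repair is to adopt this reduction of (ii) to (i) and to replace the coefficient chase in (i) by the orbit-dimension estimate.
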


\begin{proof}
(i) Let $k\geq 3$ be sufficiently large for $f$ w.r.t. $\mathcal{R}$ and let $X:=\mathfrak{m}^2/\mathfrak{m}^{k+1}$. Then $\mathcal{R}\text{-}\mathrm{mod}(f)=\mathcal{R}_k\text{-}\mathrm{mod}(j^k f)$. Let 
$$Y:=x_1^2+\mathfrak{m}^3/\mathfrak{m}^{k+1}\subset J_k,\ Y':=x_1^2+\mathfrak{m}^3/\mathfrak{m}^{4}$$
and let 
$$\mathcal{H}:=\{\Phi\in \mathcal{R}_k\ | \Phi(x_1)=x_1\},\ \mathcal{H}':=\{\Phi\in \mathcal{R}_1\ | \Phi(x_1)=x_1\}.$$
Then $\mathcal{H}$ (resp. $\mathcal{H}'$) acts on $Y$ (resp. $Y'$) by $(\Phi,y)\mapsto \Phi(y)$ and we have
$$i^{-1}(\mathcal{R}_k\cdot i(y))=\mathcal{H}\cdot y \subset p^{-1}(\mathcal{H}'\cdot p(y)) \ \forall y\in Y'$$
with the inclusion $i:Y\hookrightarrow X$ and the projection $p:Y\twoheadrightarrow Y'$. It follows from Proposition \ref{pro1.4.1}(iv) that 
$$\mathcal{R}_k\text{-}\mathrm{mod}(y)\geq \mathcal{H}\text{-}\mathrm{mod}(y) \geq \mathcal{H}'\text{-}\mathrm{mod}(p(y)),\forall y\in Y.$$ 
Moreover, Proposition \ref{pro1.4.1}(i) yields that
$$\mathcal{H}'\text{-}\mathrm{mod}(p(y))\geq \dim Y'-\dim \mathcal{H}'= \binom{n+2}{3}-n(n-1)\geq 1.$$
This implies that $\mathcal{R}_k\text{-}\mathrm{mod}(y)\geq 1$ for all $y\in Y$ and hence $\mathcal{R}\text{-}\mathrm{mod}(f)\geq 1$.

(ii) By (i), $f_t$ is not right simple for all $t\neq 0$, where $f_t({\bf x}):=f({\bf x})+t x_1^2$ is an unfolding of $f$ at $0$ over $\mathbb A^1$. Hence Proposition \ref{pro1.4.5.3} yields that  $f$ is not right simple.
\end{proof}

\begin{proof}[Proof of Theorem \ref{thm1.5.2}]
The ``if"-statement is obvious. Now, take a right simple singularity $f\in \mathfrak{m}^2\subset K[[{\bf x}]]$. Then $mt(f)=2$ by Proposition \ref{pro1.4.4}. The splitting lemma (Lemma \ref{lm1.5.5}) yields that $f$ is right equivalent to 
$$x_1x_2+x_3x_4+\ldots+x_{2l-1}x_{2l}+g(x_{2l+1},\ldots,x_n)$$
with $g\in \langle x_{2l+1},\ldots,x_{n} \rangle^3$ or $g\in x_{2l+1}^2+\langle x_{2l+1},\ldots,x_{n} \rangle^3$ if $2l<n$. Combining Lemma \ref{lm1.5.6} and Proposition \ref{pro1.5.8} we obtain that $2l=n$, which proves the theorem.
\end{proof}

\appendix
\section{}
\subsection{Modality for algebraic group actions}\label{sec.a1} Let an algebraic group $G$ act on the variety $X$. We define the notion of number of $G$-parameters and show that it coincides with the $G$-modality, which proves the independence of modality of the Rosenicht stratification. Moreover if $X'$ is any variety (without $G$-action) and if $h:X'\to X$ is a morphism, we generalize these notions to the equivalence relation induced by $h$ on $X'$. This allows us to use deformation theory. As consequences, we give interesting properties of modality, which are used in Section \ref{sec2}.

\begin{definition}{\rm
Let $U\subset X$ be an open neighbourhood of $x\in X$ and $W$ be constructible in $X$. We introduce
\begin{eqnarray*}
\dim_x W&:=&\max\{\dim Z\ |\ Z \text{ an irreducible component of }W \text{ containing }x \},\\
U(i)&:=&U_G(i):=\{ y \in U\ |\ \dim_y (U\cap G\cdot y ) =i\}, i\geq 0,\\
G\text{-}\mathrm{par}(U)&:=&\max_{i\geq 0}\{\dim U(i)-i\},
\end{eqnarray*}
and call
$$G\text{-}\mathrm{par}(x):=\min \{G\text{-}\mathrm{par}(U)\ |\ U \text{ a neighbourhood of x}\}$$
the {\em number of $G$-parameters} of $x$ (in $X$).
}\end{definition} 

The following proposition is a special case of Proposition \ref{pro1.4.2} (with $h=\mathrm{id}$), which is proven below.

\begin{proposition}\label{pro1.4.0}
We have $G\text{-}\mathrm{par}(U)=G\text{-}\mathrm{mod}(U)$ and therefore $G\text{-}\mathrm{par}(x)=G\text{-}\mathrm{mod}(x)$ for all $x\in X$.
\end{proposition}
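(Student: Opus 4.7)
The plan is to reduce the problem to a single open set $U$, since taking the infimum over neighbourhoods of $x$ on both sides then gives the second assertion automatically. So I will fix an open $U\subset X$ and show $G\text{-}\mathrm{par}(U)=G\text{-}\mathrm{mod}(U)$.

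The first step is to simplify the definition of $U(i)$. Since $G\cdot y$ is irreducible and $U$ is an open neighbourhood of $y$, the intersection $U\cap G\cdot y$ is a non-empty open (hence dense) subset of $G\cdot y$; therefore $\dim_y(U\cap G\cdot y)=\dim(G\cdot y)$, so that
$$U(i)=\{y\in U:\dim(G\cdot y)=i\}.$$

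Next I would refine the given Rosenlicht stratification by orbit dimension, replacing each $X_j$ by the locally closed pieces $X_j^{(d)}:=\{y\in X_j:\dim(G\cdot y)=d\}$, which are locally closed thanks to upper semicontinuity of fibre dimension for the quotient morphism $p_j$. Because any orbit lying in $X_j^{(d)}$ stays entirely inside $X_j^{(d)}$, the fibres of $p_j|_{X_j^{(d)}}$ over its image are still full $G$-orbits, all of dimension $d$. This refinement does not alter $\max_j\dim p_j(U\cap X_j)$: indeed $p_j(U\cap X_j)=\bigcup_d p_j(U\cap X_j^{(d)})$, and the dimension of a finite union is the maximum of the dimensions of the pieces. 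Hence we may assume from the outset that the orbit dimension $d_j$ is constant on each stratum $X_j$.

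Under this assumption, the fibre dimension theorem applied to $p_j\colon U\cap X_j\to p_j(U\cap X_j)$ (whose fibres are non-empty open subsets of orbits of dimension $d_j$) gives
$$\dim(U\cap X_j)=\dim p_j(U\cap X_j)+d_j.$$
Since the $X_j$ partition $X$ into $G$-invariant pieces and $U(i)\cap X_j$ equals $U\cap X_j$ when $d_j=i$ and is empty otherwise, it follows that
$$\dim U(i)=\max_{j:\,d_j=i}\dim(U\cap X_j)=\max_{j:\,d_j=i}\bigl(\dim p_j(U\cap X_j)+i\bigr),$$
and, after taking $\max_i$ and interchanging the nested maxima,
$$G\text{-}\mathrm{par}(U)=\max_i\bigl(\dim U(i)-i\bigr)=\max_j\dim p_j(U\cap X_j)=G\text{-}\mathrm{mod}(U).$$

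The main delicacy is the dimension-formula step: the fibre dimension theorem is usually stated for morphisms of irreducible varieties, but $U\cap X_j$ is in general only constructible. One handles this by decomposing into irreducible components (or invoking an equidimensional version of the theorem), which is harmless because fibres have constant dimension $d_j$; everything else is bookkeeping with $\max$'s.
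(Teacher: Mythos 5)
Your argument is correct and in substance the same as the paper's: the paper deduces this proposition from the more general Proposition \ref{pro1.4.2} (the case of a morphism $h$, specialized to $h=\mathrm{id}$), whose proof likewise decomposes each Rosenlicht stratum by the fibre dimension of $p_j$ --- your $U\cap X_j^{(d)}$ is exactly the level set $e_j^{-1}(d)$ there --- and then invokes the fibre-dimension formula (Lemma \ref{lm1.4.1}); your single chain of equalities neatly replaces the paper's two separate inequalities, which use parts (i) and (ii) of that lemma respectively. Two caveats. First, $G\cdot y$ need not be irreducible, since the paper allows arbitrary (possibly disconnected) algebraic groups; what you actually need, and what is true, is that an orbit is \emph{equidimensional} (a finite union of translates of the $G^{0}$-orbit), which still gives $\dim_y(U\cap G\cdot y)=\dim(G\cdot y)$. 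Second, the ``delicacy'' you flag at the end is precisely the content of Lemma \ref{lm1.4.1}(i), and its lower-bound half is less harmless than ``decompose into irreducible components'' suggests: a large fibre over a single point does not bound the dimension of a component from below (think of a blow-up), so one needs a pigeonhole step producing one irreducible component of $U\cap X_j$ that simultaneously dominates a top-dimensional component of the image and contains $d_j$-dimensional pieces of the fibres over a dense set of image points. The paper defers this to \cite{Ng12}; a fully self-contained version of your proof would have to supply it. Everything else (the refinement not changing $\max_j\dim p_j(U\cap X_j)$, the identification of $U(i)$ with the union of strata with $d_j=i$, and the interchange of maxima) is correct.
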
 

\begin{corollary}\label{cor1.4.1}
$G\text{-}\mathrm{mod}(U)$ and $G\text{-}\mathrm{mod}(x)$ are independent of the Rosenlicht stratification of $X$.
\end{corollary}

We call $x,y\in X$ {\em $G$-equivalent}, denoted by $x\sim_G y$, if they ly in the same $G$-orbit.

\begin{proposition}\label{pro1.4.1}
Let the algebraic group $G$ act on the variety $X$. 
\begin{itemize}
\item[(i)] If $G$ and $X$ are both irreducible then $G\text{-}\mathrm{mod}(x)\geq \dim X-\dim G$.
\item[(ii)] For any $y\sim_G x$, $G\text{-}\mathrm{mod}(x)=G\text{-}\mathrm{mod}(y)$.
\item[(iii)] If the subvariety $X'\subset X$ is invariant under $G$ and if $x\in X'$ then
$$G\text{-}\mathrm{mod}(x)\text{ in } X \geq G\text{-}\mathrm{mod}(x) \text{ in } X'.$$ 
Equality holds if $G\text{-}\mathrm{mod}(x) \text{ in } X'\geq G\text{-}\mathrm{mod}(y),\ \forall y\in X\setminus X'.$
\item[(iv)] Let the algebraic group $G'$ act on the variety $X'$ and let $p:X\rightarrow X'$ be a morphism of varieties.
\begin{itemize}
\item[(1)]  If $p$ is open and if 
$$G\cdot x\subset p^{-1}\big ( G'\cdot p(x)\big ),\ \forall x\in X,$$
then
$$G\text{-}\mathrm{mod}(x)\geq G'\text{-}\mathrm{mod}(p(x)), \ \forall x\in X.$$
\item[(2)] If $p$ is {\em equivariant} (i.e. $G\cdot x = p^{-1}\big ( G'\cdot p(x)\big ),\ \forall x\in X$), then 
 $$G\text{-}\mathrm{mod}(x)\leq G'\text{-}\mathrm{mod}(p(x)), \ \forall x\in X.$$
\end{itemize}
In particular, if $p$ is open and equivariant then $G\text{-}\mathrm{mod}(x)=G'\text{-}\mathrm{mod}(p(x)), \ \forall x\in X.$
\end{itemize}
\end{proposition}

For the elementary but not so short proof, using $G\text{-}\mathrm{mod}=G\text{-}\mathrm{par}$, we refer to \cite[Prop. 3.2.4--3.2.7]{Ng13}.

In order to relate the notion of modality to deformation theory we introduce the notion of $G$-modality w.r.t. a morphism from an arbitrary variety to the $G$-variety $X$.

\begin{definition}\label{def1.4.2.2}{\rm 
Let $\{X_{j},j=1,\ldots,s\}$ be a Rosenlicht stratification of $X$ under $G$ with projections $p_j:X_j\to X_j/G$. 

Let $h: X'\to X$ be a morphism of algebraic varieties, and let $U'$ be an open neighbourhood of $x'\in X'$. Set $X'_j:= h^{-1}(X_j)$, $U'_j:=U'\cap X'_j$. We define
$$G\text{-}\mathrm{mod}_h(U'):=\max_{j=1,\ldots, s}\{\dim \big( p_j(h(U'_j))\big) \},$$
and call 
$$G\text{-}\mathrm{mod}_{h}(x'):=\min \{G\text{-}\mathrm{mod}(U')\  |\ U' \text{ a neighbourhood of  } x'\text{ in } X'\}$$
the {\em $G$-modality} of $x'$ w.r.t. $h$.
}\end{definition}

\begin{definition}\label{def1.4.2.1}{\rm
Let the  algebraic group $G$ act on the variety $X$, let $h: X'\to X$ be a morphism of algebraic varieties and let $U'$ be an open neighbourhood of $x'\in X'$. For $u'\in U'$ and each $i\geq 0$ we define
\begin{eqnarray*}
V_{G,h}(u')&:=&\{ v'\in X'\ |\ G\cdot h(v')=G\cdot h(u')\}=h^{-1}(G\cdot h(u')),\\
U'(i)&:=&U'_{G,h}(i):=\{u'\in U'\ |\ \dim_{u'} \big( U'\cap V_{G,h}(u')\big)=i\},\\
G\text{-}\mathrm{par}_h(U')&:=&\max_{i\geq 0}\{\dim U'_{G,h}(i)-i\},
\end{eqnarray*}and call
$$G\text{-}\mathrm{par}_h(x'):=\min\{G\text{-}\mathrm{par}_h(U')\ |\ U'\text{ a neighbourhood of }x'\in X'\}$$
{\em the number of $G$-parameters of $x'$ w.r.t. $h$}.
}\end{definition}

If we call $x',y'\in X'$ equivalent if $h(x')$ and $h(y')$ ly in the same orbit in $X$, we get an equivalence relation on $X'$ with equivalence class of $x'$ equal to
$$V_{G,h}(x')=h^{-1}\big(G\cdot h(x')\big).$$
It follows in particular that each equivalence class is locally closed in $X'$. 

\begin{proposition}\label{pro1.4.2}
We have $G\text{-}\mathrm{par}_{h}(U')=G\text{-}\mathrm{mod}_{h}(U')$ and therefore $G\text{-}\mathrm{par}_{h}(x')=G\text{-}\mathrm{mod}_{h}(x')$ for all $x'\in X'$.
\end{proposition}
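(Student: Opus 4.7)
The plan is to reduce the identity to a single general statement about fibre dimensions of a morphism of algebraic varieties, applied stratum by stratum through the Rosenlicht stratification.

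The starting observation is that for $u' \in U'_j := U' \cap h^{-1}(X_j)$, since $X_j$ is $G$-invariant and $h(u') \in X_j$, we have $G \cdot h(u') \subseteq X_j$ and hence $V_{G,h}(u') = h^{-1}(G \cdot h(u')) \subseteq X'_j$. Thus $U' \cap V_{G,h}(u') = U'_j \cap V_{G,h}(u')$. Moreover, because $p_j$ is a geometric quotient its fibres are precisely the $G$-orbits, so setting $\phi_j := p_j \circ h|_{U'_j} : U'_j \to X_j/G$ the fibre $\phi_j^{-1}(\phi_j(u'))$ coincides with $U'_j \cap V_{G,h}(u')$. This gives the disjoint decomposition
\begin{equation*}
U'_{G,h}(i) = \bigsqcup_{j=1}^{s} U'_j(i), \qquad U'_j(i) := \bigl\{u' \in U'_j : \dim_{u'} \phi_j^{-1}(\phi_j(u')) = i\bigr\},
\end{equation*}
into locally closed pieces, and therefore $G\text{-}\mathrm{par}_h(U') = \max_{j}\max_{i}\bigl(\dim U'_j(i) - i\bigr)$, while by definition $G\text{-}\mathrm{mod}_h(U') = \max_j \dim \phi_j(U'_j)$.

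The proposition thus reduces to the following general lemma: for any morphism $\phi \colon W \to Y$ of algebraic varieties,
\begin{equation*}
\max_{i\geq 0}\bigl(\dim W(i)-i\bigr) = \dim \phi(W), \qquad W(i):=\{w \in W : \dim_w \phi^{-1}(\phi(w))=i\}.
\end{equation*}
Applying this with $\phi_j : U'_j \to X_j/G$ and taking the maximum over $j$ yields the proposition. The inequality $\leq$ in the lemma is the easy direction: for any irreducible component $C$ of $W(i)$, the fibre-dimension theorem applied to $\phi|_C : C \to \overline{\phi(C)}$ gives $\dim C = \dim \overline{\phi(C)} + e_C$ for the generic fibre dimension $e_C$; and for generic $c \in C$, $e_C = \dim_c(C \cap \phi^{-1}(\phi(c))) \leq \dim_c \phi^{-1}(\phi(c)) = i$, so $\dim C \leq \dim\phi(W)+i$.

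For the reverse inequality I would choose an irreducible component $W_\alpha$ of $W$ with $\dim\overline{\phi(W_\alpha)} = \dim\phi(W) =: d$ and set $e := \dim W_\alpha - d$. On the dense open subset $W_\alpha^\circ$ of $W_\alpha$ obtained by removing the other irreducible components of $W$, the value $\dim_w \phi^{-1}(\phi(w))$ coincides with $\dim_w (W_\alpha \cap \phi^{-1}(\phi(w)))$, since no other component of $W$ passes through $w$. By Chevalley's upper semicontinuity applied to $\phi|_{W_\alpha} : W_\alpha \to \overline{\phi(W_\alpha)}$, this dimension equals $e$ on a further dense open of $W_\alpha$; intersecting with $W_\alpha^\circ$ gives a dense open of $W_\alpha$ contained in $W(e)$, so $\dim W(e) \geq \dim W_\alpha = d+e$ and $\dim W(e) - e \geq d$. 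The main obstacle is precisely this last step: the fibre dimension of the global $\phi$ at a point of $W_\alpha$ may exceed the fibre dimension of $\phi|_{W_\alpha}$ whenever another irreducible component of $W$ also meets that fibre, so one must carefully restrict to the open part of $W_\alpha$ where only $W_\alpha$ contributes before invoking Chevalley semicontinuity.
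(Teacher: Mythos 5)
Your proof is correct and follows essentially the same route as the paper's: both decompose $U'(i)$ along the Rosenlicht strata via the composites $p_j\circ h|_{U'_j}$ and then invoke the standard fibre-dimension facts (Chevalley semicontinuity plus the theorem on dimension of fibres), which the paper isolates as Lemma \ref{lm1.4.1} with proof deferred to \cite{Ng12}. The only difference is cosmetic: you state and prove the per-stratum statement as a single equality $\max_i(\dim W(i)-i)=\dim\phi(W)$, and your care in restricting to the locus where only one irreducible component contributes to the local fibre dimension is exactly the point the paper leaves implicit.
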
 

For the proof we need the following properties of fibers of a morphism (\cite{Mum88}, \cite{Bor91}). Let $f : X \to Y$ be a morphism of (not necessarily irreducible) algebraic varieties. First, Chevalley's theorem says that if $W\subset X$ is constructible in $X$, then $f(W)$ is constructible in $Y$. Secondly, the function $x\mapsto e(x):=\dim_x f^{-1}(f(x))$ is {\em upper semi-continuous}, i.e. for all integers $n$ the set $\{x\in X\ |\ e(x) \geq n\}$ is closed. For the proof of these statements and the following lemma some well known results in \cite[I.8]{Mum88} for irreducible varieties, are used. For details see \cite[Cor. 3.1.7 and 3.1.8]{Ng13}.

\begin{lemma}\label{lm1.4.1}
Let $f : X \to Y$ be a morphism algebraic varieties and let $e:X\to \mathbb N$ be the function defined by $x\mapsto e(x):=\dim_x f^{-1}(f(x))$.
\begin{itemize}
\item[(i)] If $e(x)$ is constant, say $e(x)=i$ for all $x\in X$, then
$$\dim X = i+\dim f(X).$$
\item[(ii)] We have
$$\max_{i\geq 0}\{ \dim \big( e^{-1}(i)\big)-i\}\geq \dim f(X).$$
\end{itemize}
\end{lemma}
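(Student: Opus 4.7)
The plan is to reduce both parts to the classical fibre-dimension theorem from Mumford \cite[I.8]{Mum88}: for a dominant morphism $g\colon X'\to Y'$ of irreducible varieties there exists an open dense $V\subset X'$ on which $\dim_x g^{-1}(g(x))=\dim X'-\dim Y'$. The two assertions will then follow by picking appropriate irreducible components of $X$ and comparing the global function $e$ with its analogue on each component.

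Write $X=X_1\cup\ldots\cup X_r$ for the decomposition into irreducible components, put $g_k:=f|_{X_k}$, $Y_k:=\overline{f(X_k)}$ and $i_k:=\dim X_k-\dim Y_k$. The crucial observation is that $X_k^\circ:=X_k\setminus\bigcup_{l\neq k}X_l$ is open not only in $X_k$ but in all of $X$, since it is the complement in $X$ of the closed set $\bigcup_{l\neq k}X_l$. Hence for $x\in X_k^\circ$ a neighbourhood of $x$ in $X$ lies in $X_k$, so the germ of $f^{-1}(f(x))$ at $x$ coincides with the germ of $g_k^{-1}(g_k(x))$, giving $e(x)=\dim_x g_k^{-1}(g_k(x))$. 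Applying Mumford's theorem to the dominant morphism $g_k\colon X_k\to Y_k$ yields an open dense $V_k\subset X_k$ on which this last dimension equals $i_k$; then $W_k:=V_k\cap X_k^\circ$ is open dense in $X_k$, and $e\equiv i_k$ on $W_k$.

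Part (i) follows at once: if $e\equiv i$ on $X$ then $i_k=i$ for every $k$, so $\dim X_k=i+\dim Y_k$, and maximising over $k$ gives $\dim X=i+\max_k\dim Y_k=i+\dim f(X)$ (using $f(X)=\bigcup_k g_k(X_k)$ together with Chevalley's theorem to identify $\max_k\dim Y_k$ with $\dim f(X)$). For part (ii) choose $k^*$ with $\dim Y_{k^*}=\dim f(X)$; since $W_{k^*}\subset e^{-1}(i_{k^*})$ is dense in the irreducible $X_{k^*}$,
$$\dim e^{-1}(i_{k^*})-i_{k^*}\geq \dim X_{k^*}-i_{k^*}=\dim Y_{k^*}=\dim f(X).$$
The only real obstacle, modest as it is, lies in moving from the irreducible setting of \cite[I.8]{Mum88} to the present general one, and this is handled precisely by the fact that $X_k^\circ$ is open in $X$, which is what lets us identify the local fibre dimension of $f$ with the local fibre dimension of $g_k$ on this set without any further work.
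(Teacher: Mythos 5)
Your proof is correct and follows exactly the route the paper indicates: the paper gives no written proof of Lemma \ref{lm1.4.1}, deferring the details to \cite{Ng12}, but states that it rests on the well-known fibre-dimension results of \cite[I.8]{Mum88} for irreducible varieties, which is precisely your reduction via the irreducible components $X_k$ and the open sets $X_k^{\circ}$. The passage from the irreducible to the general case (openness of $X_k^{\circ}$ in $X$, hence agreement of the local fibre dimensions of $f$ and $g_k$ there) is handled correctly.
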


\begin{proof}[Proof of Proposition \ref{pro1.4.2}]
We use the notations of Definition \ref{def1.4.2.1} and consider the composition
$$h_j:U'_j\overset{h}{\longrightarrow } X_j \overset{p_j}{\longrightarrow } X_j/G.$$
Note that 
$$h^{-1}_j(h_j(x'))=U'_j\cap h^{-1}((G\cdot h(x')))=U'_j\cap V_{G,h}(x'),\ \forall x'\in U'_j.$$ 
By the upper semi-continuity of the functions $e_j: U'_j\to \mathbb N, x' \mapsto \dim_{x'} h^{-1}_j(h_j(x'))$, the sets $e_j^{-1}(i)$ are locally closed and then
$$U'(i)=\bigcup_{j=1}^{s} e_j^{-1}(i)$$
is constructible in $X'$ for all $i\geq 0$. 

Taking an $i$ such that $G\text{-}\mathrm{par}_h(U')=\dim U'(i)-i$ and applying Lemma \ref{lm1.4.1}(i) we deduce
$$G\text{-}\mathrm{par}_h(U')=\max_j\{\dim e_j^{-1}(i)-i \}=\max_j\{\dim \big (h_j( e_j^{-1}(i)) \big)\}\leq G\text{-}\mathrm{mod}_h(U').$$

Let $j\in \{1,\ldots, s\}$ be such that 
$$G\text{-}\mathrm{mod}_h(U')=\dim \big( p_j(h(U'_j))\big)=\dim h_j(U'_{j}).$$
Then
$$G\text{-}\mathrm{par}_h(U')=\max_i\{\dim U'(i)-i\} \geq \max_i\{\dim(e_j^{-1}(i))- i\}\geq \dim h_j(X'_j)=G\text{-}\mathrm{mod}_h(U'),$$
where the second inequality follows from Lemma \ref{lm1.4.1}(ii). Hence $G\text{-}\mathrm{par}_h(U')=G\text{-}\mathrm{mod}_h(U').$
\end{proof}

Let the  algebraic group $G$ act on the variety $X$. The proofs of the following corollaries follow easily from Definition \ref{def1.4.2.1}, \ref{def1.4.2.2} and Proposition \ref{pro1.4.2} (for details we refer to \cite[Cor. 3.3.4-3.3.7]{Ng13}).

\begin{corollary}\label{coro1.4.1}
Let $h: X'\to X$ be a morphism of algebraic varieties. Then
$$G\text{-}\mathrm{mod}_h(x')\leq G\text{-}\mathrm{mod}(h(x')), \ \forall x'\in X'.$$
Equality holds if for every open neighbourhood $U'$ of $x'$, there exists an open neighbourhood $U$ of $h(x')$ in $X$ s.t. $U\subset h(U')$. In particular, equality holds if $h$ is open.
\end{corollary}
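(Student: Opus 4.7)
The plan is to prove the inequality and the equality condition directly from the definitions of $G\text{-}\mathrm{mod}(U)$ (Definition \ref{def1.4.1}) and $G\text{-}\mathrm{mod}_h(U')$ (Definition \ref{def1.4.2.2}); no invocation of the Rosenlicht-free characterization via parameters will be needed, although Proposition \ref{pro1.4.2} guarantees independence of the chosen stratification so the result is well-posed. Fix a Rosenlicht stratification $\{X_j\}_{j=1,\ldots,s}$ of $X$ under $G$ with quotient morphisms $p_j\colon X_j\to X_j/G$, and abbreviate $X'_j:=h^{-1}(X_j)$.

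For the inequality, I would argue as follows. Take any open neighbourhood $U$ of $h(x')$ in $X$, and set $U':=h^{-1}(U)$, which is an open neighbourhood of $x'$ in $X'$. Then $U'_j=U'\cap X'_j=h^{-1}(U\cap X_j)$, so $h(U'_j)\subseteq U\cap X_j$ and therefore $p_j(h(U'_j))\subseteq p_j(U\cap X_j)$ for each $j$. Taking dimensions and then the max over $j$ gives $G\text{-}\mathrm{mod}_h(U')\leq G\text{-}\mathrm{mod}(U)$. Since this particular $U'$ arises from $U$, we obtain $G\text{-}\mathrm{mod}_h(x')\leq G\text{-}\mathrm{mod}_h(U')\leq G\text{-}\mathrm{mod}(U)$, and taking the infimum over $U$ yields $G\text{-}\mathrm{mod}_h(x')\leq G\text{-}\mathrm{mod}(h(x'))$.

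For the equality, assume the hypothesis: for every open neighbourhood $U'$ of $x'$ there exists an open neighbourhood $U$ of $h(x')$ in $X$ with $U\subseteq h(U')$. The key observation is then $U\cap X_j\subseteq h(U'_j)$: indeed, if $y\in U\cap X_j$, write $y=h(v')$ with $v'\in U'$; from $h(v')\in X_j$ we get $v'\in h^{-1}(X_j)=X'_j$, hence $v'\in U'_j$ and $y\in h(U'_j)$. Consequently $p_j(U\cap X_j)\subseteq p_j(h(U'_j))$ for every $j$, which gives $G\text{-}\mathrm{mod}(U)\leq G\text{-}\mathrm{mod}_h(U')$. Hence $G\text{-}\mathrm{mod}(h(x'))\leq G\text{-}\mathrm{mod}(U)\leq G\text{-}\mathrm{mod}_h(U')$, and taking the infimum over $U'$ gives the reverse inequality $G\text{-}\mathrm{mod}(h(x'))\leq G\text{-}\mathrm{mod}_h(x')$.

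The "in particular" is immediate: if $h$ is open, then for every open neighbourhood $U'$ of $x'$ the image $h(U')$ is itself open in $X$ and contains $h(x')$, so one can take $U=h(U')$ and the hypothesis of the equality clause is automatic. No step presents any real difficulty; the only point worth care is the verification $U\cap X_j\subseteq h(U'_j)$, which must track the preimage of the stratum through $h$ rather than through the inclusion $h(U')\supseteq U$ alone.
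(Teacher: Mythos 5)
Your proof is correct and matches the route the paper indicates: the paper gives no written proof (it states that the corollary "follows easily" from Definitions \ref{def1.4.1}, \ref{def1.4.2.2} and Proposition \ref{pro1.4.2}, deferring details to \cite{Ng12}), and your argument is exactly the expected direct unwinding of those definitions, comparing $p_j(h(U'_j))$ with $p_j(U\cap X_j)$ stratum by stratum. The one point needing care --- that $U\subset h(U')$ forces $U\cap X_j\subset h(U'_j)$ because $h^{-1}(X_j)=X'_j$ --- is handled correctly.
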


\begin{corollary}\label{coro1.4.2}
Let
$$g:X''\overset{h'}{\to}X' \overset{h}{\to} X$$
be morphisms of algebraic varieties. Then
$$G\text{-}\mathrm{mod}_g(x'')\leq G\text{-}\mathrm{mod}_h(h'(x'')), \ \forall x''\in X''.$$
Equality holds if for every open neighbourhood $U''$ of $x''$, there exists an open neighbourhood $U'$ of $h'(x'')$ in $X'$ s.t. $U'\subset h'(U'')$. In particular, equality holds if $h'$ is open.
\end{corollary}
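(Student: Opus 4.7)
The plan is to work directly from Definition \ref{def1.4.2.2}, which expresses both quantities as maxima of dimensions of images in the various quotients $X_j/G$. The decisive observation is that, since $g=h\circ h'$, whenever open sets $U''\subset X''$ and $U'\subset X'$ satisfy $h'(U''_j)\subset U'_j$ one has $g(U''_j)=h(h'(U''_j))\subset h(U'_j)$, and applying $p_j$ yields $G\text{-}\mathrm{mod}_g(U'')\leq G\text{-}\mathrm{mod}_h(U')$; the opposite inequality is obtained if $U'_j\subset h'(U''_j)$. Here, in analogy with Definition \ref{def1.4.2.2}, $U''_j=U''\cap g^{-1}(X_j)=U''\cap(h')^{-1}(X'_j)$.

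First I would establish the inequality $G\text{-}\mathrm{mod}_g(x'')\leq G\text{-}\mathrm{mod}_h(h'(x''))$. Given an arbitrary open neighbourhood $U'$ of $h'(x'')$, I would set $U'':=(h')^{-1}(U')$, which is open and contains $x''$. A direct check gives $U''_j=(h')^{-1}(U'_j)$, hence $h'(U''_j)\subset U'_j$, so by the observation above $G\text{-}\mathrm{mod}_g(U'')\leq G\text{-}\mathrm{mod}_h(U')$. Since $G\text{-}\mathrm{mod}_g(x'')\leq G\text{-}\mathrm{mod}_g(U'')$ by definition, passing to the infimum over $U'$ yields the desired inequality.

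For equality under the stated hypothesis, I would start from an arbitrary open neighbourhood $U''$ of $x''$ and choose, by hypothesis, an open neighbourhood $U'$ of $h'(x'')$ with $U'\subset h'(U'')$. A short surjectivity argument then gives $U'_j\subset h'(U''_j)$: any $y\in U'_j$ lifts to some $z\in U''$ with $h'(z)=y\in X'_j=h^{-1}(X_j)$, which forces $z\in(h')^{-1}(X'_j)\cap U''=U''_j$. Hence $h(U'_j)\subset g(U''_j)$ and $G\text{-}\mathrm{mod}_h(U')\leq G\text{-}\mathrm{mod}_g(U'')$; combined with $G\text{-}\mathrm{mod}_h(h'(x''))\leq G\text{-}\mathrm{mod}_h(U')$ and taking the infimum over $U''$, this gives the reverse inequality. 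The case of $h'$ open is immediate by choosing $U':=h'(U'')$, which is then an open neighbourhood of $h'(x'')$ contained in $h'(U'')$.

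The main potential obstacle is purely bookkeeping: tracking at each step whether sets are open or merely constructible in the appropriate variety, and verifying that intersection with the strata commutes correctly with $(h')^{-1}$ and with $h'$. No deeper input beyond the definitions is required; in particular, Proposition \ref{pro1.4.2} is not needed for this argument, which closely parallels the proof of Corollary \ref{coro1.4.1} with $h$ replaced by $h'$ and with the $G$-action on $X$ replaced by the equivalence relation induced on $X'$ by $h$.
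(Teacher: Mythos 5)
Your proof is correct, and it is exactly the kind of direct argument from Definition \ref{def1.4.2.2} that the paper indicates (the paper itself gives no details, deferring to \cite{Ng12}, and merely notes the corollaries follow from the definitions and Proposition \ref{pro1.4.2}). The key identities $U''_j=(h')^{-1}(U'_j)$ for $U''=(h')^{-1}(U')$, and $U'_j\subset h'(U''_j)$ when $U'\subset h'(U'')$, are verified correctly, and your observation that Proposition \ref{pro1.4.2} is not actually needed here is accurate.
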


\begin{corollary}\label{coro1.4.3}
Let the algebraic groups $G$ resp. $G'$ act on the varieties $X$ resp. $X'$. Let 
$$h: Y\to X \text{ and } h':Y\to X'$$ be two morphisms of varieties such that 
$$h^{-1}(G\cdot h(y))=h'^{-1}(G'\cdot h' (y)), \forall y\in Y.$$
Then for any open subset $V\subset Y$ we have $G\text{-}\mathrm{mod}_h(V)=G'\text{-}\mathrm{mod}_{h'}(V)$. Consequently
$$G\text{-}\mathrm{mod}_h(y)=G'\text{-}\mathrm{mod}_{h'}(y), \ \forall y\in Y.$$
\end{corollary}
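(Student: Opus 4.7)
The plan is to bypass the Rosenlicht stratifications entirely by exploiting Proposition \ref{pro1.4.2}, which tells us that modality coincides with the number of parameters. The latter is formulated purely in terms of the sets $V_{G,h}(y)=h^{-1}(G\cdot h(y))$ and the local dimension function $y\mapsto \dim_y\bigl(U\cap V_{G,h}(y)\bigr)$, so the hypothesis of the corollary is tailored precisely to make the two par--definitions yield identical data.

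More concretely, I would fix an open subset $V\subset Y$ and compare the sets
$$V_{G,h}(y)=h^{-1}\bigl(G\cdot h(y)\bigr),\qquad V_{G',h'}(y)=h'^{-1}\bigl(G'\cdot h'(y)\bigr).$$
By the hypothesis these two subsets of $Y$ coincide for every $y\in Y$, so for every $i\geq 0$ the locally constructible strata
$$V_{G,h}(i)=\bigl\{y\in V\ \big|\ \dim_y(V\cap V_{G,h}(y))=i\bigr\},\qquad V_{G',h'}(i)=\bigl\{y\in V\ \big|\ \dim_y(V\cap V_{G',h'}(y))=i\bigr\}$$
from Definition \ref{def1.4.2.1} are identical as subsets of $Y$. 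Taking the maximum over $i$ of $\dim V(i)-i$ then gives $G\text{-}\mathrm{par}_h(V)=G'\text{-}\mathrm{par}_{h'}(V)$.

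Invoking Proposition \ref{pro1.4.2} on both sides converts par into mod, yielding $G\text{-}\mathrm{mod}_h(V)=G'\text{-}\mathrm{mod}_{h'}(V)$ for every open $V\subset Y$. The pointwise statement $G\text{-}\mathrm{mod}_h(y)=G'\text{-}\mathrm{mod}_{h'}(y)$ then follows immediately by taking the infimum of both sides over all open neighbourhoods $V$ of $y\in Y$, since these infima are now computed from equal numerical functions of $V$.

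There is essentially no obstacle here: all the real work has already been done in Proposition \ref{pro1.4.2}. The only thing to be a little careful about is that the sets $V_{G,h}(i)$ are constructible so that their dimensions are well--defined; but this constructibility is exactly what is established in the proof of Proposition \ref{pro1.4.2} via Chevalley's theorem and the upper semi--continuity of the fibre--dimension function, and it applies verbatim here to both $h$ and $h'$.
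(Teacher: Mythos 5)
Your argument is correct and is exactly the route the paper intends: it states that Corollary \ref{coro1.4.3} ``follows easily from Definition \ref{def1.4.2.1}, \ref{def1.4.2.2} and Proposition \ref{pro1.4.2}'', i.e.\ one observes that the hypothesis makes $V_{G,h}(y)=V_{G',h'}(y)$, hence the strata $U'(i)$ and the par--numbers coincide, and then one transfers this to the mod--numbers via Proposition \ref{pro1.4.2}. Nothing further is needed.
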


\begin{corollary}\label{coro1.4.4}
Let $h: X'\to X $ and $h_i: Y_i\to X, i=1,\ldots,k$, be morphisms of varieties and let $U'$ be open in $X'$ satisfying, that for all $x'\in U'$ there exists $y_i\in Y_i$ such that $h(x')\sim_G h_i(y_i)$. Then
\begin{itemize}
\item[(i)] $G\text{-}\mathrm{mod}_h(U')\leq \max\{ G\text{-}\mathrm{mod}_{h_i}(Y_i)| i=1,\ldots,k\}$.
\item[(ii)] Assume moreover that for all $y_i\in Y_i$ there exists $x'\in X'$ such that $h(x')\sim_G h_i(y_i)$ then
$$G\text{-}\mathrm{mod}_h(U') = \max\{ G\text{-}\mathrm{mod}_{h_i}(Y_i)| i=1,\ldots,k\}.$$
\end{itemize}
\end{corollary}

\subsection{Right and contact groups}\label{sec.a2}
The group $\mathcal{R}:=Aut_K (K[[{\bf x}]])$ of automorphisms of the local analytic $K$-algebra $K[[{\bf x}]]=K[[x_1,\ldots,x_n]]$ is called the {\em right group}. The {\em contact group} $\mathcal{K}$ is the semi-direct product of $Aut_K (K[[{\bf x}]])$ with the group $(K[[{\bf x}]])^{*}$ of units in $K[[{\bf x}]]$. These groups act on $K[[{\bf x}]]$ by
\begin{displaymath}
\begin{array}{ccccccc}
\mathcal{R}\times K[[{\bf x}]]&\longrightarrow & K[[{\bf x}]] &\text{ and } &\mathcal{K}\times K[[{\bf x}]]&\longrightarrow & K[[{\bf x}]]\\
(\Phi,f) &\mapsto &\Phi(f) & &((\Phi,u),f) &\mapsto &u\cdot\Phi(f).
\end{array}
\end{displaymath}
Two elements $f,g\in K[[{\bf x}]]$ are called {\em right} ($\sim_r$) resp. {\em contact} ($\sim_c$) {\em equivalent} iff they belong to the same $\mathcal{R}$- resp. $\mathcal{K}$-orbit.

Note that neither $\mathcal{R}$ nor $\mathcal{K}$ are algebraic groups, as they are infinite dimensional. In order to be able to apply the results from the previous section, we have to pass to the jet spaces.

An element $\Phi$ in the right group $\mathcal{R}$ is uniquely determined by $n$ power series
$$\varphi_i:=\Phi(x_i)=\sum_{j=1}^n a_i^j x_j+\text{ terms of higher oder}$$
such that $\det (a_i^j)\neq 0$. For each integer $k$ we define the $k$-jet of $\Phi$,
$$\Phi_k:=(j^k(\varphi_1),\ldots,j^k(\varphi_n)).$$
Here  
$$j^k:K[[{\bf x}]]\to J_k:=K[[{\bf x}]]/\mathfrak{m}^{k+1}$$
denotes the canonical projection to the $k$-jet space $J_k$ and $\mathfrak{m}=\langle {\bf x}\rangle$ the maximal ideal of $K[[{\bf x}]]$. For $f\in K[[{\bf x}]]$, $j^kf:=j^k(f)$ is represented in $K[[{\bf x}]]$ by the power series expansion of $f$ up to order $k$. We call 
$$\mathcal{R}_k:=\{\Phi_k\ |\ \Phi\in  \mathcal{R}\}\text{ respectively } \mathcal{K}_k:=\{(\Phi_k,j^k u)\ |\ (\Phi,u)\in  \mathcal{K}\}$$
the $k$-jet of $\mathcal{R}$ respectively of $\mathcal{K}$. Note that $J_k$ is an affine space and $\mathcal{R}_k, \mathcal{K}_k$ are affine algebraic groups, all equipped with the Zariski topology. These groups act algebraically on $J_k$ by
\begin{displaymath}
\begin{array}{ccccccc}
\mathcal{K}_k\times J_k&\longrightarrow & J_k &\text{ and } & \mathcal{K}_k\times J_k&\longrightarrow & J_k\\
(\Phi_k,j^k f)&\mapsto &j^k(\Phi_k(j^kf)) & &((\Phi_k,j^k u),j^k f)&\mapsto &j^k(j^ku\cdot\Phi_k(j^kf)).
\end{array}
\end{displaymath}
$$$$

The first statement (i) of the following lemma says that the Milnor number $\mu$ and the Tjurina number $\tau$ are semi-continuous w.r.t. unfoldings. Its proof can be adapted from the construction in \cite[Thm. I.2.6]{GLS06} by applying \cite[Thm. 12.8]{Har77}. The second statement follows from (i).

In order to shorten notations we define a {\em topology} on $K[[{\bf x}]]$ to be the coarsest topology such that all projections $j^k:K[[{\bf x}]]\to J_k$ are continuous, i.e. $V\subset K[[{\bf x}]]$ is open iff $j^k(V)$ is open in $J_k$ for all $k$. 

\begin{lemma}[Semi-continuity of $\mu$ and $\tau$]\label{pro1.1.1}
Let $f\in K[[{\bf x}]]$ with $\mu(f)<\infty$ resp. $\tau(f)<\infty$.
\begin{itemize}
\item[(i)] Let $f_{t}({\bf x})=F({\bf x},t)$ be an unfolding of $f=f_{t_0}$ at $t_0$ over an affine variety $T$. Then there exists an open neighbourhood $U\subset T$ of ${t}_0$ such that 
$$\mu(f_{t})\leq \mu(f), \text{ resp. } \tau(f_{\bf t})\leq \tau(f), \forall {t}\in U.$$
More general, for all $i\in \mathbb N$ the sets
$$\{t\in T\ |\ \mu(f_{t})\leq i\} \text{ resp. } \{t\in T\ |\ \tau(f_{t})\leq i\}$$
are open in $T$.
\item[(ii)] The functions $\mu \text{ and } \tau$ are upper semi-continuous on $K[[{\bf x}]]$, i.e. for all $i\in \mathbb N$, the sets
$$U_{\mu,i}:=\{f\in K[[{\bf x}]]\ |\ \mu(f)\leq i\} \text{ and } U_{\tau,i}:=\{f\in K[[{\bf x}]]\ |\ \tau(f)\leq i\}$$ 
are open in $K[[{\bf x}]]$.
\end{itemize}
\end{lemma}

\addtocontents{toc}{\protect  \vskip 0.2cm }

\end{document}